\numberwithin{equation}{section}
\numberwithin{figure}{section}
\newenvironment{lyxlist}[1]
{\begin{list}{}
{\settowidth{\labelwidth}{#1}
 \setlength{\leftmargin}{\labelwidth}
 \addtolength{\leftmargin}{\labelsep}
 }}
{\end{list}}
\theoremstyle{plain}
\newtheorem{thm}{\protect\theoremname}[section]
  \theoremstyle{definition}
  \newtheorem{defn}[thm]{\protect\definitionname}
  \theoremstyle{plain}
  \newtheorem{lem}[thm]{\protect\lemmaname}
  \theoremstyle{plain}
  \newtheorem{algorithm}[thm]{\protect\algorithmname}
  \theoremstyle{remark}
  \newtheorem{rem}[thm]{\protect\remarkname}
  \theoremstyle{plain}
  \newtheorem{prop}[thm]{\protect\propositionname}
  \theoremstyle{plain}
  \newtheorem{cor}[thm]{\protect\corollaryname}
  \theoremstyle{definition}
  \newtheorem{example}[thm]{\protect\examplename}
\newcommand{\lip}{\mbox{\rm lip}\,}
\newcommand{\gph}{\mbox{\rm Graph}}
\newcommand{\pos}{\mbox{\rm pos}}
\newcommand{\conv}{\mbox{\rm conv}}
\newcommand{\spanm}{\mbox{\rm span}}
  \providecommand{\algorithmname}{Algorithm}
  \providecommand{\corollaryname}{Corollary}
  \providecommand{\definitionname}{Definition}
  \providecommand{\examplename}{Example}
  \providecommand{\lemmaname}{Lemma}
  \providecommand{\propositionname}{Proposition}
  \providecommand{\remarkname}{Remark}
\providecommand{\theoremname}{Theorem}
\begin{document}
\title[Algorithm analysis for feasibility problems]{Improved analysis of algorithms based on supporting halfspaces and quadratic programming for the convex intersection and feasibility problems}

\subjclass[2010]{90C30, 90C59, 47J25, 47A46, 47A50, 52A20, 49J53, 65K10.}
\begin{abstract}
This paper improves the algorithms based on supporting halfspaces
and quadratic programming for convex set intersection problems in
our earlier paper in several directions. First, we give conditions
so that much smaller quadratic programs (QPs) and approximate projections
arising from partially solving the QPs are sufficient for multiple-term
superlinear convergence for nonsmooth problems. Second, we identify
additional regularity, which we call the second order supporting hyperplane
property (SOSH), that gives multiple-term quadratic convergence. Third,
we show that these fast convergence results carry over for the convex
inequality problem. Fourth, we show that infeasibility can be detected
in finitely many operations. Lastly, we explain how we can use the
dual active set QP algorithm of Goldfarb and Idnani to get useful
iterates by solving the QPs partially, overcoming the problem of solving
large QPs in our algorithms.
\end{abstract}

\author{C.H. Jeffrey Pang}

\curraddr{Department of Mathematics\\ 
National University of Singapore\\ 
Block S17 08-11\\ 
10 Lower Kent Ridge Road\\ 
Singapore 119076 }

\email{matpchj@nus.edu.sg}

\keywords{feasibility problems, alternating projections, supporting halfspace,
quadratic programming, superlinear convergence, quadratic convergence,
finite convergence.}

\date{\today{}}

\maketitle
\tableofcontents{}

\section{Introduction}

We consider two different problems in this paper. First, let $K_{1},\dots,K_{r}$
be $r$ closed convex sets in a Hilbert space $X$. The \emph{Set
Intersection Problem }(SIP) is 
\begin{equation}
\mbox{(SIP):}\quad\mbox{Find }x\in K:=\bigcap_{i=1}^{r}K_{i}\mbox{, where }K\neq\emptyset.\label{eq:SIP}
\end{equation}
The \emph{Convex Inequality Problem }(CIP) is 
\begin{equation}
\mbox{(CIP):}\quad\mbox{ For a convex }f:\mathbb{R}^{n}\to\mathbb{R}\mbox{, find }x\in\mathbb{R}^{n}\mbox{ s.t. }f(x)\leq0.\label{eq:CIP}
\end{equation}

This paper improves on the results in \cite{cut_Pang12}, where we
studied convergence results for accelerating convergence of algorithms
for the SIP. The idea there was to collect as many supporting halfspaces
generated by the projection process to create a polyhedron that is
an outer approximation of $K$. Then one can project onto this polyhedron
using quadratic programming. See Figure \ref{fig:alt-proj-compare}.

\begin{figure}[h]
\includegraphics[scale=0.4]{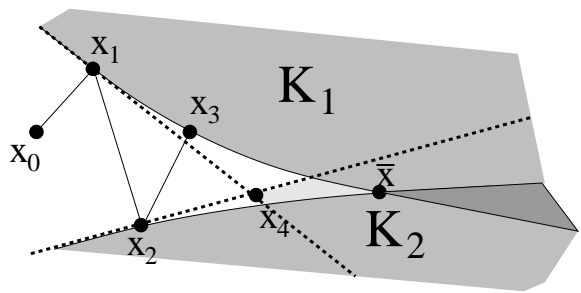}

\caption{\label{fig:alt-proj-compare}The method of alternating projections
on two convex sets $K_{1}$ and $K_{2}$ in $\mathbb{R}^{2}$ with
starting iterate $x_{0}$ arrives at $x_{3}$ in three iterations.
Consider the supporting halfspaces planes of $K_{1}$ and $K_{2}$
at $x_{1}$ and $x_{2}$. The projection of $x_{1}$ onto the intersection
of these halfspaces, which is $x_{4}$, is much closer to the point
$\bar{x}$ than $x_{3}$, especially when the boundary of $K_{1}$
and $K_{2}$ have fewer second order effects and when the angle between
the boundary of $K_{1}$ and $K_{2}$ is small. On the other hand,
the point $x_{3}$ is ruled out by the supporting hyperplane of $K_{2}$
passing through $x_{2}$.}
\end{figure}

We note that the idea of supporting halfspaces and quadratic programming
was studied in \cite{G-P98,G-P01}, but for the CIP when $f(\cdot)$
is the maximum of a finite number of smooth functions. Quadratic programs
with one affine constraint (not necessarily of codimension 1) and
a halfspace were used to accelerate algorithms for the CIP in \cite{Pierra84,BausCombKruk06}.
The idea of using QPs was also present in other works on the CIP (for
example \cite{Fukushima82}).

A popular method for solving the SIP is the method of alternating
projections. We highlight the references \cite{BB96_survey,BR09,Censor84,CensorZenios97,Combettes93,Combettes96,Deutsch95,Deutsch01_survey,EsRa11},
as well as \cite[Chapter 9]{Deustch01} and \cite[Subsubsection 4.5.4]{BZ05},
for an introduction on the SIP and their applications. The papers
\cite{GPR67,GK89,BDHP03} explored acceleration methods for the method
of alternating projections. Another acceleration method is the Dos
Santos method \cite{DosSantos87,DePierro81}, which is based on Cimmino's
method for linear equations \cite{Cimmino38}.

We remark that the treatment for the case when all the sets are affine
spaces are covered in \cite{sub_BAP}.

In this paper, we deal with the case where the $f(\cdot)$ in the
CIP were convex but not smooth. Some early work on (not necessarily
convex) inequality problems are \cite{Robinson_76_CIP,Polak_Mayne79,Mayne_Polak_Heunis81,Fukushima82},
and we elaborate on their contributions in this introduction.  A
related work is \cite{FletcherLeyffer03}, where SQP methods are used
to solve nonconvex but smooth CIPs.

In \cite{Robinson_76_CIP}, Robinson considered the \emph{$K$-Convex
Inequality Problem} (KCIP), which is a generalization of the (CIP).
For $f:\mathbb{R}^{n}\to\mathbb{R}^{m}$, and a closed convex cone
$K\subset\mathbb{R}^{m}$, we write $y_{1}\leq_{K}y_{2}$ if $y_{2}-y_{1}\in K$.
The KCIP is defined by 
\begin{equation}
\mbox{(KCIP):}\mbox{ For }f:\mathbb{R}^{n}\to\mathbb{R}^{m}\mbox{ and }C\subset\mathbb{R}^{n}\mbox{, find }x\in C\mbox{ s.t. }f(x)\leq_{K}0.\label{eq:KCIP}
\end{equation}

Robinson's algorithm in \cite{Robinson_76_CIP} for the CIP can be
described as follows: At each iterate $x_{i}$, a subgradient $y_{i}\in\partial f(x_{i})$
is obtained, and the halfspace 
\begin{equation}
H_{i}^{\leq}:=\{x\in\mathbb{R}^{n}\mid f(x_{i})+\left\langle y_{i},x-x_{i}\right\rangle \leq0\}\label{eq:halfspace-type}
\end{equation}
 contains $f^{-1}((-\infty,0])$. The next iterate $x_{i+1}$ is obtained
by projecting $x_{i}$ onto $H_{i}^{\leq}$. Assuming regularity and
convexity (and no smoothness), Robinson proved that the CIP (or more
generally, the KCIP) converges at least linearly. 

The idea of collecting halfspaces and projecting onto their intersection
using quadratic programming can be carried over for the CIP. In the
CIP, the halfspaces are of type \eqref{eq:halfspace-type}.

A related paper on the CIP is \cite{Fukushima82}, where Fukushima
obtained finite convergence for the CIP when $f^{-1}((-\infty,0])$
has nonempty interior, assuming only convexity and not smoothness.
The idea is to try to find an $x$ satisfying 
\[
f(x)\leq-\epsilon_{k}
\]
at iteration $k$, where $\{\epsilon_{k}\}$ is a sequence of positive
numbers converging to zero at a rate slower than any linearly converging
sequence. (It appears that \cite{DePierroIusem88} have come up with
a similar result independently.) For smooth problems, this idea can
be traced back to \cite{Polak_Mayne79,Mayne_Polak_Heunis81} or possibly
earlier. 

Other papers on the SIP are \cite{Kiwiel95}, where the interest is
on problems where $r$, the number of closed convex sets $K_{i}$,
is large. A method for the best approximation problem (stated in \eqref{eq:Proj-pblm})
is Dykstra's algorithm \cite{Dykstra83,BD86,Han88}. There has been
recent interest in nonconvex SIP problems \cite{LewisMalick08,LLM09_lin_conv_alt_proj}.
We believe that an adaptation of our algorithm can be useful for nonconvex
problems.

It appears that prevailing algorithms for the SIP (see for example
the algorithms in \cite{CensorChenCombettesDavidiHerman12,EsRa11})
do not exploit smoothness of the sets and fall back to a Newton-like
method and achieve superlinear convergence in the manner of \cite{G-P98,G-P01}
and the algorithms of this paper. We note however that variants of
the algebraic reconstruction technique (ART), which try to find a
point in the intersection of hyperslabs rather than general convex
sets, can achieve finite convergence (See \cite{HermanChen08} and
the references therein).

\subsection{\label{sub:Contrib}Contributions of this paper}

This paper improves on the algorithm for the SIP in \cite{cut_Pang12}.
In \cite{cut_Pang12}, a multiple-term superlinearly convergent algorithm
for nonsmooth SIPs for the case when $X=\mathbb{R}^{n}$ was proposed,
but the algorithm there requires one to solve impractically huge QPs.
In this paper, we show that the following adjustments, reflected in
Algorithm \ref{alg:Mass-proj-alg}, maintain such fast convergence:
\begin{itemize}
\item Instead of accumulating $r\bar{p}$ halfspaces (where $\bar{p}$ is
a huge parameter) as proposed in \cite{cut_Pang12}, superlinear convergence
can be achieved if the normals of two of the halfspaces produced by
the projection process are close enough to each other. (See Theorem
\ref{thm:conv-SIP}.) This condition is weaker and easier to check
in practice, and can greatly reduce the size of the QPs that we need
to solve to maintain superlinear convergence. (See Remark \ref{rem:small-S-i}.)
We present Corollaries \ref{cor:p-term-suplin} and \ref{cor:fast-sip}
based on this result. In particular, Corollary \ref{cor:fast-sip}
states that when the boundaries of the sets are smooth, our algorithm
reduces to a Newton method, and our framework can prove that the convergence
is indeed superlinear or quadratic.
\item The requirement of projecting onto the intersection of the halfspaces
is relaxed, reflecting that an approximate projection can also guarantee
superlinear convergence. The quadratic programming subproblem for
projecting onto the polyhedron can be solved partially using a dual
active set algorithm of \cite{Goldfarb_Idnani}, then extrapolated
to a feasible point in the polyhedron. See Section \ref{sec:inner-GI}
and Remark \ref{rem:step-2-relax}.
\end{itemize}
Large parts of the proofs in \cite{cut_Pang12} for this result remain
the same here. We only focus on the additional details without repeating
the proofs that are largely unchanged from \cite{cut_Pang12}.

We introduce the \emph{Second Order Supporting Hyperplane} (SOSH)
property (Definition \ref{def:SOSH}), which we show is present in
sets defined by convex inequalities (Proposition \ref{prop:C2-SOSH}).
Moreover, the SOSH property is preserved under intersections under
a constraint qualification (Proposition \ref{prop:SOSH-intersect}).
Under the SOSH property, we can achieve multiple-term quadratic convergence
of the SIP algorithm.

We then show that a multiple-term superlinear convergence for a nonsmooth
CIP \eqref{eq:CIP} can be achieved using the techniques studied for
the SIP in Section \ref{sec:CIP}. 

Next, we look at infeasibility detection. In \cite[Section 6]{cut_Pang12},
we had discussed infeasibility detection for the Best Approximation
Problem (BAP) (See \eqref{eq:Proj-pblm}). In Theorem \ref{thm:No-cluster-pt},
we show that under reasonable conditions, algorithms for the SIP,
CIP and BAP do not have strong cluster points in the infeasible case
in finite dimensions, and can even obtain a certificate of infeasibility
in a finite number of operations. We make further observations about
the BAP in Theorem \ref{thm:BAP-to-infinity}.

Lastly, we explain that the dual active set QP algorithm of Goldfarb
and Idnani \cite{Goldfarb_Idnani} gives good iterates after each
inner iteration even when the QPs in our algorithms are not solved
fully. Such a property is useful since the QPs that we solve may be
large and difficult to solve to optimality.

\subsection{Notation}

We recall some standard notation in convex analysis that are helpful
for the rest of the paper. As our results rely on the compactness
of the unit sphere, we only treat the finite dimensional case here.
Let $C\subset\mathbb{R}^{n}$ be a closed convex set, $f:\mathbb{R}^{n}\to\mathbb{R}$
be a convex function, and $x\in C$. Then we have the following notation:
\begin{lyxlist}{00.00.0000}
\item [{$N_{C}(x)$}] The \emph{normal cone} $N_{C}(x)$ \emph{at the point
$x\in C$} is the set \\
$\{v\mid\langle v,y-x\rangle\leq0\mbox{ for all }y\in C\}$.
\item [{$\mathbb{B}(x,r)$}] Ball with center $x$ and radius $r$: $\mathbb{B}(x,r):=\{y\mid\|y-x\|\leq r\}$.\\
We write $\mathbb{B}:=\mathbb{B}(0,1)$.
\item [{$\partial f(x)$}] The \emph{subdifferential} \emph{of $f$ at
$x$}: \\
$\partial f(x):=\{y\mid f(x^{\prime})\geq f(x)+\left\langle y,x^{\prime}-x\right\rangle $
for all $x^{\prime}\in\mathbb{R}^{n}\}$.
\item [{$d(x,S)$}] The distance of $x$ to a set $S\subset\mathbb{R}^{n}$:
$d(x,S):=\inf_{s\in S}\|x-s\|$.
\item [{$\lip f(x)$}] The \emph{Lipschitz modulus of $f$ at $x$}: $\lip f(x):=\limsup_{{x_{1},x_{2}\to x\atop x_{1}\neq x_{2}}}\frac{\|f(x_{1})-f(x_{2})\|}{\|x_{1}-x_{2}\|}$.
\item [{$\pos(S)$}] For a set $S\subset\mathbb{R}^{n}$, the \emph{positive
hull} is the set \\
$\pos(S):=\{ts\mid t\in[0,\infty),s\in S\}$.
\item [{$R(S)$}] For a convex set $S\subset\mathbb{R}^{n}$, the \emph{recession
cone }is the set\\
$\{d:x+td\in S$ for all $t\geq0$ and $x\in S\}$.
\end{lyxlist}
We denote $F:\mathbb{R}^{n}\rightrightarrows\mathbb{R}^{m}$ to be
a \emph{set-valued map} that maps a point in $\mathbb{R}^{n}$ to
a subset of $\mathbb{R}^{m}$. A set-valued map $F:\mathbb{R}^{n}\rightrightarrows\mathbb{R}^{m}$
is \emph{outer semicontinuous }if its \emph{graph }$\gph(F):=\{(x,y)\mid y\in F(x)\}$
is closed. A convex cone $C\subset\mathbb{R}^{n}$ is \emph{pointed
}if it does not contain a line. The notation ``$\partial$'' can
also mean the boundary of a closed set, which should not lead to confusion
with the subdifferential. In our proofs, we also make use of the Pompieu
Hausdorff distance. We refer the reader to standard texts in convex
and variational analysis \cite{Rockafellar70,HiriartUrrutyLamerechal93a,RW98,Cla83,Mor06}
for more information.

\section{Preliminary results}

In this section, we collect a few results that are nonstandard, but
will be useful for the rest of the paper.
\begin{defn}
(Fej\'{e}r monotone sequence) Let $X$ be a Hilbert space, $C\subset X$
be a closed convex set, and $\{x_{i}\}$ be a sequence in $X$. We
say that $\{x_{i}\}$ is\emph{ Fej\'{e}r monotone with respect to
$C$} if 
\[
\|x_{i+1}-c\|\leq\|x_{i}-c\|\mbox{ for all }c\in C\mbox{ and }i=1,2,\dots
\]

\end{defn}
A tool for obtaining a Fej\'{e}r monotone sequence is stated below.
\begin{thm}
\label{thm:Fejer-contraction}(Fej\'{e}r attraction property) Let
$X$ be a Hilbert space. For a closed convex set $C\subset X$, $x\in X$,
$\lambda\in[0,2]$, and the projection $P_{C}(x)$ of $x$ onto $C$,
let the \emph{relaxation operator }$R_{C,\lambda}:X\to X$ \cite{Agmon54}
be defined by 
\[
R_{C,\lambda}(x)=x+\lambda(P_{C}(x)-x).
\]
Then 
\begin{equation}
\|R_{C,\lambda}(x)-c\|^{2}\leq\|x-c\|^{2}-\lambda(2-\lambda)d(x,C)^{2}\mbox{ for all }y\in C.\label{eq:Fejer-mon-eq}
\end{equation}

\end{thm}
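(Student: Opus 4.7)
The plan is to expand $\|R_{C,\lambda}(x) - c\|^2$ directly using the representation $R_{C,\lambda}(x) = (1-\lambda)x + \lambda p$, where I write $p := P_C(x)$. First I would record the variational characterization of the projection: for every $c \in C$, $\langle x - p, c - p\rangle \leq 0$. Expanding $\|x-c\|^2 = \|(x-p) + (p-c)\|^2$ then yields the pointwise bound
\[
\|p-c\|^2 \;\leq\; \|x-c\|^2 - \|x-p\|^2 \;=\; \|x-c\|^2 - d(x,C)^2.
\]

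Next I would invoke the algebraic identity
\[
\|(1-\lambda)a + \lambda b\|^2 = (1-\lambda)\|a\|^2 + \lambda\|b\|^2 - \lambda(1-\lambda)\|a-b\|^2,
\]
which holds for \emph{every} real $\lambda$ (just expand the left side and use $2\langle a,b\rangle = \|a\|^2 + \|b\|^2 - \|a-b\|^2$). Applying it with $a = x-c$ and $b = p-c$, so that $a-b = x-p$, gives
\[
\|R_{C,\lambda}(x) - c\|^2 = (1-\lambda)\|x-c\|^2 + \lambda\|p-c\|^2 - \lambda(1-\lambda)\, d(x,C)^2.
\]

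Since $\lambda \geq 0$, I may substitute the projection bound on $\|p-c\|^2$ into this equation and combine the two $d(x,C)^2$ contributions:
\[
-\lambda\, d(x,C)^2 - \lambda(1-\lambda)\, d(x,C)^2 = -\lambda(2-\lambda)\, d(x,C)^2,
\]
which produces exactly \eqref{eq:Fejer-mon-eq}. Because $\lambda \in [0,2]$ forces $\lambda(2-\lambda) \geq 0$, the bound is a genuine contraction toward each $c \in C$.

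There is no real obstacle here: the whole argument is a short manipulation built on the projection variational inequality. The only subtlety worth noting is the use of the identity for $\|(1-\lambda)a + \lambda b\|^2$ in the full range $\lambda \in [0,2]$, rather than just for convex combinations $\lambda \in [0,1]$; this is what allows the over-relaxed case $\lambda \in (1,2]$ to be handled in the same line of reasoning.
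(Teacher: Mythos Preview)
Your proof is correct and is the standard argument for this classical inequality. Note, however, that the paper does not actually supply a proof of this theorem: it is listed among the preliminary results and attributed to \cite{Agmon54}, so there is no ``paper's own proof'' to compare against. Your write-up would serve perfectly well as the omitted verification.
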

Here are some consequences of Fej\'{e}r monotonicity. We take our
results from \cite[Theorem 4.5.10 and Lemma 4.5.8]{BZ05}.
\begin{thm}
\label{thm:Fejer-ppty}(Properties of Fej\'{e}r monotonicity) Let
$X$ be a Hilbert space, let $C\subset X$ be a closed convex set
and let $\{x_{i}\}$ be a Fej\'{e}r monotone sequence with respect
to $C$. Then 
\begin{enumerate}
\item $\{x_{i}\}$ is bounded and $d(C,x_{i+1})\leq d(C,x_{i})$, and
\item $\{x_{i}\}$ has at most one weak cluster point in $C$.
\end{enumerate}
\end{thm}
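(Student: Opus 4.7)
The plan is to prove the two properties separately, both directly from the Fej\'{e}r monotonicity inequality $\|x_{i+1} - c\| \leq \|x_i - c\|$ for all $c \in C$.

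For part (1), boundedness is immediate: fix any $c_0 \in C$ and apply the defining inequality iteratively to get $\|x_i - c_0\| \leq \|x_0 - c_0\|$ for all $i$, so $\{x_i\}$ lies in a ball of radius $\|x_0 - c_0\|$ around $c_0$. For the distance inequality, I would specialize the Fej\'{e}r condition to the particular choice $c = P_C(x_i) \in C$, yielding
\[
d(x_{i+1}, C) \leq \|x_{i+1} - P_C(x_i)\| \leq \|x_i - P_C(x_i)\| = d(x_i, C).
\]

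For part (2), suppose toward contradiction that there are two distinct weak cluster points $c_1, c_2 \in C$ with $x_{i_k} \rightharpoonup c_1$ and $x_{j_k} \rightharpoonup c_2$. By Fej\'{e}r monotonicity (applied to $c_1$ and to $c_2$), both scalar sequences $\|x_i - c_1\|^2$ and $\|x_i - c_2\|^2$ are nonincreasing and bounded below, hence convergent. Expanding the inner products gives
\[
\|x_i - c_1\|^2 - \|x_i - c_2\|^2 = -2 \langle x_i, c_1 - c_2\rangle + \|c_1\|^2 - \|c_2\|^2,
\]
so the real sequence $\langle x_i, c_1 - c_2\rangle$ converges to some limit $\ell$. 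Passing along the two subsequences and using weak convergence, $\ell = \langle c_1, c_1 - c_2\rangle = \langle c_2, c_1 - c_2\rangle$, which forces $\|c_1 - c_2\|^2 = 0$ and hence $c_1 = c_2$, contradicting the assumption.

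I expect no serious obstacle here: the argument for (1) is a one-line specialization of the hypothesis, and the argument for (2) is the standard ``Opial-type'' trick exploiting that two distinct Fej\'{e}r-monotone limits of $\|x_i - c_j\|$ linearize the inner product $\langle x_i, c_1 - c_2\rangle$. The only mild point to be careful about is that weak convergence of $x_{i_k}$ to $c_j$ is exactly what is needed to evaluate the limit of the linear functional $\langle \cdot, c_1 - c_2 \rangle$ along each subsequence, which is why the conclusion is a uniqueness statement for weak (not merely strong) cluster points.
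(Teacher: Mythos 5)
Your proof is correct: part (1) is the standard specialization of the Fej\'{e}r inequality to $c=P_C(x_i)$, and part (2) is the classical argument that Fej\'{e}r monotonicity makes $\langle x_i, c_1-c_2\rangle$ convergent, forcing any two weak cluster points in $C$ to coincide. The paper itself gives no proof, quoting the result from \cite[Theorem 4.5.10 and Lemma 4.5.8]{BZ05}, and your argument is essentially the standard one found there (with the usual implicit assumption that $C\neq\emptyset$, needed both for boundedness and for $P_C$ to exist).
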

The following result is elementary and proved in \cite{cut_Pang12},
and will also be used in the proof of Theorem \ref{thm:superlin-CIP}.
Recall that the dual cone $K^{+}\subset\mathbb{R}^{n}$ of a convex
cone $K\subset\mathbb{R}^{n}$ is 
\[
K^{+}:=\{y\mid\left\langle x,y\right\rangle \geq0\mbox{ for all }x\in K\}.
\]

\begin{lem}
\label{lem:pointed-cone-modulus}(Pointed cone) For a closed pointed
convex cone $K\subset\mathbb{R}^{n}$, there is a unit vector $d$
in $K^{+}$, the dual cone of $K$, and some $c>0$ such that $\mathbb{B}(d,c)\subset K^{+}$.
For any unit vector $v\in K$, we have $d^{T}v\geq c$.

Moreover, suppose $\lambda_{i}\geq0$, and $v_{i}$ are unit vectors
in $K$ for all $i$, and $\sum_{i=1}^{\infty}\lambda_{i}v_{i}$ converges
to $\bar{v}$. Clearly, $\bar{v}\in K$. Then $\|\sum_{i=1}^{\infty}\lambda_{i}v_{i}\|\geq c\sum_{i=1}^{\infty}\lambda_{i}$,
which also implies that $\sum_{i=1}^{\infty}\lambda_{i}$ is finite.
\end{lem}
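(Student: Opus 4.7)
The plan is to exploit the classical polar duality that a closed convex cone is pointed if and only if its dual cone has nonempty interior. For Part 1, since $K\subset\mathbb{R}^n$ is a closed pointed convex cone, I would apply the bipolar identity $K^{++}=K$ to conclude that $K^+$ has nonempty interior: otherwise $K^+$ would lie in a hyperplane $\{y:\langle a,y\rangle=0\}$ for some $a\neq 0$, whence both $a$ and $-a$ would lie in $K^{++}=K$, contradicting pointedness. Then I would pick any interior point of $K^+$, normalize it to obtain a unit vector $d\in K^+$, and choose $c>0$ small enough that $\mathbb{B}(d,c)\subset K^+$.

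For Part 2, the key observation is that the worst direction of $v$ against $d$ is captured by a boundary point of $\mathbb{B}(d,c)$. Given a unit vector $v\in K$, the point $d-cv$ satisfies $\|(d-cv)-d\|=c\|v\|=c$, so $d-cv\in\mathbb{B}(d,c)\subset K^+$. Applying the defining inequality of $K^+$ to the pair $(d-cv,v)$ gives $\langle d-cv,v\rangle\geq 0$, and since $\|v\|=1$, rearranging yields $d^\top v\geq c$.

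For Part 3, I would first work with partial sums. Setting $S_N:=\sum_{i=1}^N\lambda_i v_i$, linearity and Part 2 give
\[
\langle d,S_N\rangle=\sum_{i=1}^N\lambda_i\langle d,v_i\rangle\geq c\sum_{i=1}^N\lambda_i.
\]
Cauchy--Schwarz together with $\|d\|=1$ then yields $\|S_N\|\geq c\sum_{i=1}^N\lambda_i$. Since $S_N\to\bar v$ by hypothesis, the left-hand side tends to $\|\bar v\|$, so the monotone partial sums of $\sum_i\lambda_i$ are bounded above by $\|\bar v\|/c$; hence $\sum_{i=1}^\infty\lambda_i$ converges, and passing to the limit in the displayed inequality gives $\|\bar v\|\geq c\sum_{i=1}^\infty\lambda_i$, which is the claim.

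There is no serious obstacle in this argument; the only nontrivial ingredient is the duality between pointedness and interior of the dual cone used in Part 1, which is standard and can be cited from \cite{Rockafellar70}. Everything else is a short computation with the dual-cone definition and Cauchy--Schwarz.
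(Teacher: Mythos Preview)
Your argument is correct. Note, however, that the paper does not actually supply its own proof of this lemma: it is stated as elementary and attributed to \cite{cut_Pang12}, so there is nothing in the present paper to compare your approach against. Your route via the standard duality ``$K$ pointed $\Leftrightarrow$ $K^{+}$ has nonempty interior'' together with the trick $d-cv\in\mathbb{B}(d,c)\subset K^{+}$ is the natural one and is almost certainly what is intended. One minor point of phrasing: after normalizing an interior point of $K^{+}$ you should remark that the resulting unit vector $d$ is still interior to $K^{+}$ (positive scaling is a homeomorphism preserving the cone), which is what justifies the existence of $c>0$ with $\mathbb{B}(d,c)\subset K^{+}$; you use this implicitly but do not say it.
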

We now recall the definition of semismoothness.
\begin{defn}
\cite{Mifflin77} (Semismoothness) Let $\Phi:\mathbb{R}^{n}\to\mathbb{R}$
be convex. We say that $\Phi$ is \emph{semismooth at $x$} if it
is directionally differentiable at $x$ and for any $V\in\partial\Phi(x+h)$,
\[
\Phi(x+h)-\Phi(x)-Vh=o(\|h\|).
\]
We say that $\Phi$ is \emph{strongly semismooth at $x$ }if $\Phi$
is semismooth at $x$ and 
\[
\Phi(x+h)-\Phi(x)-Vh=O(\|h\|^{2}).
\]

\end{defn}
Semismoothness is also defined for vector-valued functions that need
not be convex, but this definition above is enough for our purposes.
Moreover, it is proved that convexity implies semismoothness, so semismoothness
is superfluous in our context. But we shall need to use strong semismoothness
later for Theorem \ref{thm:superlin-CIP}(b).

\section{\label{sec:improve-shqp}Improving convergence results of the SIP}

In this section, we show how to improve the convergence results for
the SIP in \cite{cut_Pang12} as detailed in Subsection \ref{sub:Contrib}.
We recall an adaptation of the $\bar{p}$-term superlinear convergent
algorithm of \cite{cut_Pang12} for the SIP.
\begin{algorithm}
\label{alg:Mass-proj-alg}(SHQP algorithm for the SIP) For a starting
iterate $x_{0}\in\mathbb{R}^{n}$ and closed convex sets $K_{l}\subset\mathbb{R}^{n}$,
where $1\leq l\leq r$, find a point in $K:=\cap_{l=1}^{r}K_{l}$. 

\textbf{\uline{Step 0}}\uline{:}\textbf{ }Set $i=0$, and let
$\bar{p}$ be a positive integer.

\textbf{\uline{Step 1:}} For $l\in\{1,\dots,r\}$, define $x_{i}^{(l)}\in\mathbb{R}^{n}$,
$a_{i}^{(l)}\in\mathbb{R}^{n}$ and $b_{i}^{(l)}\in\mathbb{R}$ by
\begin{eqnarray*}
x_{i}^{(l)} & = & P_{K_{l}}(x_{i}),\\
a_{i}^{(l)} & = & x_{i}-x_{i}^{(l)},\\
\mbox{and }b_{i}^{(l)} & = & \left\langle a_{i}^{(l)},x_{i}^{(l)}\right\rangle .
\end{eqnarray*}
For each $i$, we let $l_{i}^{*}\in\{1,\dots,r\}$ be such that 
\[
l_{i}^{*}:=\arg\max_{1\leq l\leq r}\|x_{i}-P_{K_{l}}(x_{i})\|.
\]

\textbf{\uline{Step 2:}} Choose $S_{i}\subset\{\max(i-\bar{p},0),\dots,i\}\times\{1,\dots,r\}$,
and define $\tilde{F}_{i}\subset\mathbb{R}^{n}$ by
\begin{eqnarray}
\tilde{F}_{i} & := & \bigcap_{(j,l)\in S_{i}}H_{(j,l)}^{\leq},\label{eq:tilde-F-i}\\
\mbox{where }H_{(j,l)}^{\leq} & := & \left\{ x:\left\langle a_{j}^{(l)},x\right\rangle \leq b_{j}^{(l)}\right\} .
\end{eqnarray}

In other words, $H_{(j,l)}^{\leq}$ is the halfspace generated by
projecting $x_{j}$ onto $K_{l}$. Let $x_{i+1}$ be chosen such that 

(1) $x_{i+1}\in\tilde{F}_{i}$,

(2) (Fej\'{e}r attraction) $\|x_{i+1}-c\|\leq\|x_{i}-c\|$ for all
$c\in K$, and 

(3) $x_{i}-x_{i+1}$ lies in $\conv(\{a_{j}^{(l)}:(j,l)\in S_{i}\})$.

\textbf{\uline{Step 3:}}\textbf{ }Set $i\leftarrow i+1$, and go
back to step 1.
\end{algorithm}
We try to keep our notation consistent with that of \cite{cut_Pang12},
but we decided that it is better to use the index $i$ in a different
manner from \cite{cut_Pang12}.
\begin{rem}
\label{rem:small-S-i}(Choice of $S_{i}$) We leave the choice of
$S_{i}$ open\textbf{ }in Algorithm \ref{alg:Mass-proj-alg}. The
choice $S_{i}=\{\max(i-\bar{p},0),\dots,i\}\times\{1,\dots,r\}$ was
studied in \cite{cut_Pang12}. We will also look at the choice $S_{i}=\{i\}\times\{1,\dots,r\}$
in Corollary \ref{cor:fast-sip}. 
\begin{rem}
\label{rem:step-2-relax}(Step 2 of Algorithm \ref{alg:Mass-proj-alg})
One way to obtain $x_{i+1}$ is by projecting $x_{i}$ onto $\tilde{F}_{i}$.
Such an $x_{i+1}$ would satisfy the conditions (1), (2) and (3) of
step 2 by the properties of the projection. The argument to see how
(2) is satisfied is simple: The polyhedron $\tilde{F}_{i}$ contains
$K$, and by the Fej\'{e}r attractive property of projections, $\|x_{i+1}-c\|\leq\|x_{i}-c\|$
for all $c\in\tilde{F}_{i}$. Condition (3) follows from the KKT conditions
of the projection operation.

A point satisfying the conditions (1), (2) and (3) may be easier to
obtain than the projection. For example, one can use the dual active
set quadratic programming algorithm of Goldfarb and Idnani \cite{Goldfarb_Idnani}
to obtain a point $\tilde{x}_{i+1}$ that is the projection of $x_{i}$
onto the polyhedron formed by intersecting a subset of $\{H_{(j,l)}^{\leq}\}_{(j,l)\in S_{i}}$.
If the point $z:=\lambda[\tilde{x}_{i+1}-x_{i}]+x_{i}$ for some $\lambda\in[1,2]$
is such that $z\in\tilde{F}_{i}$, then in view of Theorem \ref{thm:Fejer-contraction},
$x_{i+1}$ can be taken to be $z$. For more details on applying the
dual quadratic programming algorithm to solve the SIP, we refer to
Section \ref{sec:inner-GI}.
\end{rem}
\end{rem}
Before we remark on the $\bar{p}$-term quadratic convergence of the
algorithm in \cite{cut_Pang12}, we need to look at a theorem on convex
sets proved in \cite{cut_Pang12} and the SOSH property defined and
studied afterward.
\begin{thm}
\cite{cut_Pang12}\label{thm:radiality}(Supporting hyperplane near
a point) Suppose $C\subset\mathbb{R}^{n}$ is a closed convex set,
and let $\bar{x}\in C$. Then for any $\epsilon>0$, there is a $\delta>0$
such that for any point $x\in[\mathbb{B}(\bar{x},\delta)\cap C]\backslash\{\bar{x}\}$
and supporting hyperplane $A$ of $C$ with unit normal $v\in N_{C}(x)$
at the point $x$, we have $\frac{d(\bar{x},A)}{\|x-\bar{x}\|}\leq\epsilon$.

Since $d(\bar{x},A)=-\left\langle v,\bar{x}-x\right\rangle $, the
conclusion can be replaced by 
\begin{equation}
0\leq-\left\langle v,\bar{x}-x\right\rangle \leq\epsilon\|\bar{x}-x\|.\label{eq:little-SOSH}
\end{equation}
\end{thm}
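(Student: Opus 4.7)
I would prove this by contradiction, exploiting outer semicontinuity of the normal cone and the polarity between normal and tangent cones for convex sets.

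The plan is to suppose the conclusion fails, so that there exists some $\epsilon_0 > 0$ for which no $\delta$ works. Then for each $k$ I can choose $x_k \in C$ with $0 < \|x_k - \bar{x}\| \le 1/k$ and a unit vector $v_k \in N_C(x_k)$ such that
\[
-\langle v_k, \bar{x} - x_k\rangle > \epsilon_0\,\|\bar{x} - x_k\|.
\]
Dividing by $\|x_k - \bar{x}\|$, this says $\langle v_k, d_k\rangle > \epsilon_0$ where $d_k := (x_k - \bar{x})/\|x_k - \bar{x}\|$ is a unit vector. Note the lower bound $0 \le -\langle v_k, \bar{x} - x_k\rangle$ comes for free from the definition of the normal cone at $x_k$ applied to the point $\bar{x} \in C$, so I only need to chase the upper bound to a contradiction.

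By compactness of the unit sphere in $\mathbb{R}^n$, I can pass to a subsequence along which $v_k \to v$ and $d_k \to d$, both unit vectors. Since $x_k \to \bar{x}$ and $v_k \in N_C(x_k)$, outer semicontinuity of the normal-cone map of the convex set $C$ (the graph of $N_C(\cdot)$ is closed) gives $v \in N_C(\bar{x})$. On the other hand, since $x_k \in C$, $x_k \to \bar{x}$, the direction $d = \lim_k d_k$ lies in the tangent cone $T_C(\bar{x})$, which for convex $C$ is just the closure of the cone of feasible directions $\pos(C - \bar{x})$.

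For convex sets we have the polarity $N_C(\bar{x}) = T_C(\bar{x})^\circ$, hence $\langle v, d\rangle \le 0$. However, passing to the limit in $\langle v_k, d_k\rangle > \epsilon_0$ gives $\langle v, d\rangle \ge \epsilon_0 > 0$, a contradiction. Finally, the restatement using $d(\bar{x}, A) = -\langle v, \bar{x} - x\rangle$ is immediate because $A = \{y : \langle v, y - x\rangle = 0\}$ and $v$ is a unit vector. The only potentially delicate step is ensuring the right kind of convergence of the $v_k$; this is why I emphasize outer semicontinuity of the normal cone map for convex sets, which is the tool doing the real work.
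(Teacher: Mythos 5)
Your argument is correct. Since the theorem is imported here from \cite{cut_Pang12} without a reproduced proof, there is nothing in this paper to compare line by line, but your compactness-and-contradiction route is a complete, self-contained justification: the lower bound $0\leq-\langle v_k,\bar{x}-x_k\rangle$ is indeed free from $\bar{x}\in C$ and $v_k\in N_C(x_k)$; outer semicontinuity of $N_C$ gives $v\in N_C(\bar{x})$; the unit vectors $d_k=(x_k-\bar{x})/\|x_k-\bar{x}\|$ lie in $\pos(C-\bar{x})$, so $d\in T_C(\bar{x})$ (one does not even need the full polarity $N_C(\bar{x})=T_C(\bar{x})^{\circ}$ — the defining inequality of the normal cone passes to the closure of $\pos(C-\bar{x})$ directly); and the limit of $\langle v_k,d_k\rangle>\epsilon_0$ yields $\langle v,d\rangle\geq\epsilon_0>0$, the desired contradiction. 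This is the same style of limiting argument (compactness of the sphere plus closedness of $\gph N_C$) that the present paper itself uses in the claims inside Proposition \ref{prop:SOSH-intersect} and in Corollary \ref{cor:fast-sip}, so your proof fits naturally with the paper's toolkit; the identity $d(\bar{x},A)=-\langle v,\bar{x}-x\rangle$ is handled exactly as you say.
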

\begin{defn}
\label{def:SOSH}(Second order supporting hyperplane property) Suppose
$C\subset\mathbb{R}^{n}$ is a closed convex set, and let $\bar{x}\in C$.
We say that $C$ has the \emph{second order supporting hyperplane
(SOSH) property at $\bar{x}$ }(or more simply, $C$ is SOSH at $\bar{x}$)
if there are $\delta>0$ and $M>0$ such that for any point $x\in[\mathbb{B}_{\delta}(\bar{x})\cap C]\backslash\{\bar{x}\}$
and $v\in N_{C}(x)$ such that $\|v\|=1$, we have 
\begin{equation}
0\leq-\left\langle v,\bar{x}-x\right\rangle \leq M\|\bar{x}-x\|^{2}.\label{eq:SOSH-1}
\end{equation}

\end{defn}
It is clear how \eqref{eq:little-SOSH} compares with \eqref{eq:SOSH-1}.
The next two results show that SOSH is prevalent in applications.
\begin{prop}
\label{prop:C2-SOSH}(Smoothness implies SOSH) Suppose a convex function
$f:\mathbb{R}^{n}\to\mathbb{R}$ is $\mathcal{C}^{2}$ at $\bar{x}$.
Then the set $C=\{x\mid f(x)\leq0\}$ is SOSH at $\bar{x}$.\end{prop}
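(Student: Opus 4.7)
My plan is to extract SOSH from the second-order Taylor expansion of $f$ at $\bar{x}$. The constant $M$ in (\ref{eq:SOSH-1}) should come out of the Hessian term, while the Taylor remainder provides the $\|x - \bar{x}\|^2$ control.

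First I would dispose of the interior case. If $f(\bar{x}) < 0$, continuity of $f$ gives $\delta > 0$ such that $\mathbb{B}(\bar{x},\delta) \subset \intr(C)$, so $N_C(x) = \{0\}$ in this ball and (\ref{eq:SOSH-1}) is vacuously true. Assume henceforth $f(\bar{x}) = 0$. Taylor's theorem provides constants $\delta_0, M_1 > 0$ with $|f(x) - \langle \nabla f(\bar{x}), x - \bar{x}\rangle| \leq M_1\|x-\bar{x}\|^2$ and $\|\nabla f(x) - \nabla f(\bar{x}) - \nabla^2 f(\bar{x})(x - \bar{x})\| = o(\|x - \bar{x}\|)$ for all $x \in \mathbb{B}(\bar{x}, \delta_0)$. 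For $x \in C$, convexity yields $\langle \nabla f(\bar{x}), x - \bar{x}\rangle \leq f(x) - f(\bar{x}) \leq 0$, while $f(x) \leq 0 = f(\bar{x})$ combined with the Taylor bound yields $\langle \nabla f(\bar{x}), x - \bar{x}\rangle \geq -M_1\|x-\bar{x}\|^2$. Hence $|\langle \nabla f(\bar{x}), x-\bar{x}\rangle| \leq M_1 \|x-\bar{x}\|^2$ on $C \cap \mathbb{B}(\bar{x}, \delta_0)$.

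The main case is $\nabla f(\bar{x}) \neq 0$. Then on a smaller ball $\|\nabla f(x)\| \geq \tfrac{1}{2}\|\nabla f(\bar{x})\| > 0$, so a Slater-type constraint qualification holds at every boundary point and $N_C(x) = \{t\nabla f(x) : t \geq 0\}$ for $x \in \partial C$; interior points of $C$ contribute only $v=0$, so the only unit normal to consider is $v = \nabla f(x)/\|\nabla f(x)\|$ at boundary points, where in addition $f(x) = 0$. Using the $\mathcal{C}^1$-expansion of $\nabla f$ and the bound from the previous paragraph, I compute
\[
\langle \nabla f(x), x-\bar{x}\rangle = \langle \nabla f(\bar{x}), x - \bar{x}\rangle + (x - \bar{x})^T \nabla^2 f(\bar{x})(x - \bar{x}) + o(\|x - \bar{x}\|^2) = O(\|x-\bar{x}\|^2),
\]
and dividing by $\|\nabla f(x)\|$ bounded below yields $-\langle v, \bar{x} - x\rangle = \langle v, x-\bar{x}\rangle \leq M\|x-\bar{x}\|^2$. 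The nonnegativity $-\langle v, \bar{x}-x\rangle \geq 0$ is immediate from $v \in N_C(x)$ and $\bar{x} \in C$.

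The hard part will be the degenerate case $\nabla f(\bar{x}) = 0$ (with $f(\bar{x}) = 0$). Then $\bar{x}$ is a global minimizer of $f$, so $f \geq 0$ on $\mathbb{R}^n$, $C$ coincides locally with $\{f = 0\}$, and $\nabla f$ vanishes on $C$ near $\bar{x}$, making the identification $N_C(x) = \{t\nabla f(x) : t \geq 0\}$ break down. My approach here would be to exploit the quadratic Taylor upper bound $f(y) \leq \tfrac{1}{2}(y - \bar{x})^T \nabla^2 f(\bar{x})(y - \bar{x}) + \epsilon\|y-\bar{x}\|^2$ together with convexity and positive semidefiniteness of $\nabla^2 f(\bar{x})$ to sandwich $C$ locally inside a quadratic region governed by $\nabla^2 f(\bar{x})$, and then to control every unit $v \in N_C(x)$ via outer semicontinuity of $N_C$ and a compactness argument on the unit sphere. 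The main technical obstacle is to translate second-order smoothness of $f$ into a purely geometric second-order property of $\partial C$ when the standard constraint qualification fails.
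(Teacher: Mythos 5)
Your treatment of the main case $\nabla f(\bar{x})\neq0$ is correct and is essentially the paper's own argument: expand $f$ to second order at $\bar{x}$, use $f(x)=f(\bar{x})=0$ to convert $\langle\nabla f(x),x-\bar{x}\rangle$ into a Hessian term plus $o(\|x-\bar{x}\|^{2})$, and then normalize. You are in fact more careful than the paper, which simply asserts $N_{C}(x)=\mathbb{R}_{+}\{\nabla f(x)\}$ and never mentions the division by $\|\nabla f(x)\|$; both steps require exactly the nondegeneracy you isolate, so the published proof in effect covers only your main case.

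The gap is your degenerate case $f(\bar{x})=0$, $\nabla f(\bar{x})=0$: you give a plan but no proof, and the plan as described cannot work. In that case $\bar{x}$ is a global minimizer, $C=\operatorname{argmin}f$, and for every direction $d$ with $\bar{x}+td\in C$ for small $t>0$ the expansion forces $d^{T}\nabla^{2}f(\bar{x})d=0$; thus $\nabla^{2}f(\bar{x})$ vanishes on precisely the directions along which it would have to control $\partial C$, so a ``quadratic region governed by $\nabla^{2}f(\bar{x})$'' places no second-order constraint on the boundary of $C$. Worse, if ``$\mathcal{C}^{2}$ at $\bar{x}$'' is read as a pointwise second-order expansion, the conclusion can fail outright: $f(x_{1},x_{2})=\max(0,|x_{1}|^{3/2}-x_{2})^{3}$ is convex and twice differentiable at $0$ with $f(0)=0$, $\nabla f(0)=0$, $\nabla^{2}f(0)=0$ (though it is not $\mathcal{C}^{2}$ on any neighborhood of $0$), yet $C=\{x_{2}\geq|x_{1}|^{3/2}\}$, and at $x=(t,t^{3/2})$ the unit normal $v\propto(\tfrac{3}{2}t^{1/2},-1)$ gives $-\langle v,\bar{x}-x\rangle\approx\tfrac{1}{2}t^{3/2}$, which is not $O(\|x-\bar{x}\|^{2})$; so $C$ is not SOSH at $0$. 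Hence closing your ``hard part'' requires either the full-neighborhood reading of $\mathcal{C}^{2}$ together with an argument of a different kind, or simply restricting to $\nabla f(\bar{x})\neq0$ (equivalently a Slater-type condition), which is what the paper's proof tacitly does; outer semicontinuity of $N_{C}$ plus compactness of the unit sphere, as sketched, does not bridge this.
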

\begin{proof}
Consider $\bar{x},x\in C$. In order for the problem to be meaningful,
we shall only consider the case where $f(\bar{x})=0$. We also assume
that $f(x)=0$ so that $C$ has a supporting hyperplane at $x$. An
easy calculation gives $N_{C}(\bar{x})=\mathbb{R}_{+}\{\nabla f(\bar{x})\}$
and $N_{C}(x)=\mathbb{R}_{+}\{\nabla f(x)\}$. Convexity ensures that
$0\leq\frac{-\nabla f(x)(\bar{x}-x)}{\|x-\bar{x}\|^{2}}$ by Theorem
\ref{thm:radiality}.

Without loss of generality, let $\bar{x}=0$. We have 
\[
f(x)=f(0)+\nabla f(0)x+\frac{1}{2}x^{T}\nabla^{2}f(0)x+o(\|x\|^{2})
\]
Since $f(x)=f(0)=0$ and $[\nabla f(0)-\nabla f(x)]x=x^{T}\nabla^{2}f(0)x+o(\|x\|^{2})$,
we have 
\[
-\nabla f(x)(x)=[\nabla f(0)-\nabla f(x)]x+\frac{1}{2}x^{T}\nabla^{2}f(0)x+o(\|x\|^{2})=O(\|x\|^{2}).
\]
 Therefore, we are done. \end{proof}
\begin{prop}
\label{prop:SOSH-intersect}(SOSH under intersection) Suppose $K_{l}\subset\mathbb{R}^{n}$
are closed convex sets that are SOSH at $\bar{x}$ for $l\in\{1,\dots,r\}$.
Let $K:=\cap_{l=1}^{r}K_{l}$, and suppose that 
\begin{equation}
\sum_{l=1}^{r}v_{l}=0,\, v_{l}\in N_{K_{l}}(\bar{x})\mbox{ implies }v_{l}=0\mbox{ for all }l\in\{1,\dots,r\}.\label{eq:CQ-bar-x}
\end{equation}
Then $K$ is SOSH at $\bar{x}$.\end{prop}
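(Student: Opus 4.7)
The plan is to leverage the SOSH hypothesis on each $K_l$ component-wise after decomposing a unit normal $v \in N_K(x)$ as a sum $v = \sum_{l=1}^r v_l$ with $v_l \in N_{K_l}(x)$. Under the constraint qualification \eqref{eq:CQ-bar-x}, one has the classical sum rule $N_K(x) = \sum_{l=1}^r N_{K_l}(x)$ for $x \in K$ near $\bar{x}$, so such a decomposition exists. Once I have control on the sizes $\|v_l\|$, the SOSH bound of each $K_l$ gives
\[
-\langle v, \bar{x}-x\rangle \;=\; \sum_{l=1}^r -\langle v_l,\bar{x}-x\rangle \;\leq\; \sum_{l=1}^r M_l\|v_l\|\,\|\bar{x}-x\|^2,
\]
so everything reduces to bounding $\sum_l \|v_l\|$ by a constant independent of $x$ and $v$.

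The main obstacle, and really the heart of the proof, is therefore the following quantitative claim: there exist $\delta_0>0$ and $C>0$ such that for every $x\in K\cap\mathbb{B}(\bar{x},\delta_0)$ and every unit $v\in N_K(x)$, there is a decomposition $v=\sum_l v_l$, $v_l\in N_{K_l}(x)$, with $\sum_l\|v_l\|\leq C$. I plan to prove this by contradiction. If it fails, there are $x_k\to\bar{x}$, unit $v_k\in N_K(x_k)$, and minimal-norm decompositions $v_k=\sum_l v_{l,k}$ with $S_k:=\sum_l\|v_{l,k}\|\to\infty$. Setting $\tilde v_{l,k}:=v_{l,k}/S_k$ yields $\sum_l\|\tilde v_{l,k}\|=1$ and $\sum_l\tilde v_{l,k}=v_k/S_k\to 0$. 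By the boundedness of the $\tilde v_{l,k}$ I extract a convergent subsequence $\tilde v_{l,k}\to \tilde v_l$, and outer semicontinuity of the normal cone mapping gives $\tilde v_l\in N_{K_l}(\bar{x})$. The limit satisfies $\sum_l\tilde v_l=0$ and $\sum_l\|\tilde v_l\|=1$, which contradicts the CQ \eqref{eq:CQ-bar-x}. (As a byproduct, the same argument shows the CQ persists on a neighborhood, justifying the sum rule used above.)

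With the bound $\sum_l\|v_l\|\leq C$ in hand, I finish as follows. Let $\delta_l>0$ and $M_l>0$ be the SOSH constants of $K_l$ at $\bar{x}$, set $\delta:=\min\{\delta_0,\delta_1,\dots,\delta_r\}$ and $M:=C\max_l M_l$. For $x\in[\mathbb{B}(\bar{x},\delta)\cap K]\setminus\{\bar{x}\}$, unit $v\in N_K(x)$, and a decomposition $v=\sum_l v_l$ from the claim, nonnegativity $-\langle v,\bar{x}-x\rangle\geq 0$ is immediate from $v\in N_K(x)$ and $\bar x\in K$. Applying SOSH to each nonzero $v_l/\|v_l\|\in N_{K_l}(x)$ (note $x\in K\subset K_l$ and $\|x-\bar{x}\|\leq\delta_l$) and summing gives
\[
-\langle v,\bar{x}-x\rangle \;\leq\; \sum_{l=1}^r M_l\|v_l\|\,\|\bar{x}-x\|^2 \;\leq\; M\,\|\bar{x}-x\|^2,
\]
which is exactly \eqref{eq:SOSH-1} for $K$ at $\bar{x}$.
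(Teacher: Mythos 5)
Your proof is correct and follows essentially the same route as the paper's: persistence of the constraint qualification at points of $K$ near $\bar{x}$ via outer semicontinuity and a normalization/compactness contradiction, the normal-cone intersection rule to decompose $v$, a uniform bound on the decomposition, and a term-wise application of the SOSH inequality; the only cosmetic difference is that you normalize by $\sum_{l}\|v_{l,k}\|$ and bound a single (minimal-norm) decomposition, whereas the paper normalizes by $\max_{l}\|v_{l,i}\|$ and proves $\max_{l}\|v_{l}\|\leq M^{\prime}\|v\|$ for every decomposition. One small presentational point: the CQ-persistence step, which you relegate to a parenthetical, is logically a prerequisite rather than a byproduct (the decompositions of $v_{k}$ at $x_{k}$ used in your contradiction argument exist only because the sum rule, hence the CQ, holds at $x_{k}$), so it should be stated and proved before the main claim.
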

\begin{proof}
Since each $K_{l}$ is SOSH at $\bar{x}$, we can find $\delta>0$
and $M>0$ such that for all $l\in\{1,\dots,r\}$ and $x\in K_{l}\cap\mathbb{B}_{\delta}(\bar{x})$
and $v\in N_{K_{l}}(x)$, we have 
\[
0\leq-\left\langle v,\bar{x}-x\right\rangle \leq M\|v\|\|\bar{x}-x\|^{2}.
\]

\textbf{\uline{Claim 1: We can reduce $\delta>0$ if necessary
so that}}
\begin{eqnarray}
 &  & \sum_{l=1}^{r}v_{l}=0,\, v_{l}\in N_{K_{l}}(x)\label{eq:CQ-all-x}\\
 &  & \qquad\qquad\mbox{ implies }v_{l}=0\mbox{ for all }l\in\{1,\dots r\}\mbox{ and }x\in K\cap\mathbb{B}_{\delta}(\bar{x}).\nonumber 
\end{eqnarray}
Suppose otherwise. Then we can find $\{x_{i}\}_{i=1}^{\infty}\in K$
such that $\lim x_{i}=\bar{x}$ and for all $i>0$, there exists $v_{l,i}\in N_{K_{l}}(x_{i})$
such that $\sum_{l=1}^{r}v_{l,i}=0$ but not all $v_{l,i}=0$. We
can normalize so that $\|v_{l,i}\|\leq1$, and for each $i$, $\max_{l}\|v_{l,i}\|=1$.
By taking a subsequence if necessary, we can assume that $\lim v_{l,i}$,
say $\bar{v}_{l}$, exists for all $l$. Not all $\bar{v}_{l}$ can
be zero, but $\sum_{l=1}^{r}\bar{v}_{l}=0$. The outer semicontinuity
of the normal cone mapping implies that $\bar{v}_{l}\in N_{K_{l}}(\bar{x})$.
This is a contradiction to \eqref{eq:CQ-bar-x}, which ends the proof
of Claim 1.

\textbf{\uline{Claim 2: There exists a constant $M^{\prime}$ such
that whenever $x\in\mathbb{B}_{\delta}(\bar{x})\cap K$, $v_{l}\in N_{K_{l}}(x)$
and $v=\sum_{l=1}^{r}v_{l}$, then $\max\|v_{l}\|\leq M^{\prime}\|v\|$. }}

Suppose otherwise. Then for each $i$, there exists $x_{i}\in\mathbb{B}_{\delta}(\bar{x})\cap K$
and $\tilde{v}_{l,i}\in N_{K_{l}}(x_{i})$ such that $\tilde{v}_{i}=\sum_{l=1}^{r}\tilde{v}_{l,i}$,
$\|\tilde{v}_{i}\|\leq\frac{1}{i}$, and $\max_{l}\|\tilde{v}_{l,i}\|=1$
for all $i$. As we take limits to infinity, this would imply that
\eqref{eq:CQ-all-x} is violated, a contradiction. This ends the proof
of Claim 2.

Since \eqref{eq:CQ-all-x} is satisfied, this means that $N_{K}(x)=\sum_{l=1}^{r}N_{K_{l}}(x)$
for all $x\in\mathbb{B}_{\delta}(\bar{x})\cap K$ by the intersection
rule for normal cones in \cite[Theorem 6.42]{RW98}. Then each $v\in N_{K}(x)$
can be written as a sum of elements in $N_{K_{l}}(x)$, say $v=\sum_{l=1}^{r}v_{l}$,
where $v_{l}\in N_{K_{l}}(x)$, and $\max\|v_{l}\|\leq M^{\prime}\|v\|$.
Then 
\begin{eqnarray*}
-\left\langle v,\bar{x}-x\right\rangle  & = & \sum_{l=1}^{r}-\left\langle v_{l},\bar{x}-x\right\rangle \\
 & \leq & M\|\bar{x}-x\|^{2}\sum_{l=1}^{r}\|v_{l}\|\quad\leq\quad M\|\bar{x}-x\|^{2}rM^{\prime}\|v\|.
\end{eqnarray*}
Thus we are done.
\end{proof}
 We recall the definition of local metric inequality, sometimes
referred to as linear regularity.
\begin{defn}
\label{def:Loc-lin-reg}(Local metric inequality) We say that a collection
of closed sets $K_{l}$, $l=1,\dots,r$ satisfies the \emph{local
metric inequality }at $\bar{x}$ if there are $\bar{\kappa}>0$ and
$\delta>0$ such that
\begin{equation}
d(x,\cap_{l=1}^{r}K_{l})\leq\bar{\kappa}\max_{1\leq l\leq r}d(x,K_{l})\mbox{ for all }x\in\mathbb{B}_{\delta}(\bar{x}).\label{eq:loc-metric-ineq}
\end{equation}

\end{defn}
The following result is well-known, and we haven't been able to pin
an original source. We refer to \cite{cut_Pang12} for a discussion
on its proof.
\begin{lem}
\label{lem:loc-metr-ineq-condn}(Condition for local metric inequality)
Suppose $\bar{x}\in K$, $K_{l}\subset\mathbb{R}^{n}$ are closed
convex sets for $l=1,\dots,r$ and $K:=\cap_{l=1}^{r}K_{l}$. Suppose
that 
\begin{enumerate}
\item If $\sum_{l=1}^{r}v_{l}=0$ for some $v_{l}\in N_{K_{l}}(\bar{x})$,
then $v_{l}=0$ for all $l=1,\dots,r$.
\end{enumerate}
Then $\{K_{l}\}_{l=1}^{r}$ satisfies the local metric inequality
at $\bar{x}$ for some $\bar{\kappa}\geq0$.
\end{lem}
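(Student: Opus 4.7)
My plan is to prove this by mimicking Claims 1 and 2 in the proof of Proposition~\ref{prop:SOSH-intersect}, and then combining their output with a supporting halfspace argument. First I would extend condition~(1) from $\bar x$ to a neighborhood: there exists $\delta>0$ such that $\sum_{l=1}^{r}v_{l}=0$ with $v_{l}\in N_{K_{l}}(y)$ forces $v_{l}=0$ for all $l$, for every $y\in\mathbb{B}_{\delta}(\bar x)\cap K$. This is exactly the argument of Claim 1 there: assume otherwise, extract sequences $y_{i}\to\bar x$ and normalized $v_{l,i}$ violating the property, pass to a subsequential limit, and use outer semicontinuity of the normal cone mapping to contradict~(1). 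A parallel compactness-and-contradiction argument, identical in form to Claim 2 in that proof, produces a constant $M'>0$ so that whenever $y\in\mathbb{B}_{\delta}(\bar x)\cap K$, $v_{l}\in N_{K_{l}}(y)$, and $v=\sum_{l=1}^{r}v_{l}$, we have $\max_{l}\|v_{l}\|\le M'\|v\|$.

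Second, the CQ on the whole neighborhood lets me invoke \cite[Theorem 6.42]{RW98} to write $N_{K}(y)=\sum_{l=1}^{r}N_{K_{l}}(y)$ for every $y\in\mathbb{B}_{\delta}(\bar x)\cap K$, so any unit normal to $K$ at such a $y$ admits a decomposition whose components are uniformly bounded by $M'$.

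Third, I extract the metric inequality as follows. For $x\in\mathbb{B}_{\delta/2}(\bar x)\setminus K$, set $y=P_{K}(x)$ and $v=(x-y)/\|x-y\|$. Since $\bar x\in K$, the triangle inequality gives $\|y-\bar x\|\le\|y-x\|+\|x-\bar x\|\le 2\|x-\bar x\|<\delta$, so the decomposition $v=\sum_{l=1}^{r}v_{l}$ with $v_{l}\in N_{K_{l}}(y)$ and $\|v_{l}\|\le M'$ is available. From $\sum_{l=1}^{r}\langle v_{l},x-y\rangle=\langle v,x-y\rangle=\|x-y\|=d(x,K)$, some index $l^{*}$ satisfies $\langle v_{l^{*}},x-y\rangle\ge d(x,K)/r$. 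Because $K_{l^{*}}$ lies in the halfspace $\{z:\langle v_{l^{*}},z-y\rangle\le 0\}$, the elementary distance-to-halfspace formula yields
\[
d(x,K_{l^{*}})\;\ge\;\frac{\langle v_{l^{*}},x-y\rangle}{\|v_{l^{*}}\|}\;\ge\;\frac{d(x,K)}{r M'},
\]
so the local metric inequality holds with $\bar\kappa=rM'$.

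The main obstacle I expect is the first step, namely the uniform extension of the CQ to a neighborhood together with the uniform decomposition bound $M'$; this is exactly what the outer semicontinuity of the normal cone mapping plus compactness on the sphere deliver, and it is the engine driving the whole argument. Once that is in place, the sum rule and the halfspace estimate combine routinely, and the remaining work is just bookkeeping with triangle inequalities.
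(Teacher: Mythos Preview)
Your argument is correct. The extension of the constraint qualification to a neighborhood and the uniform decomposition bound $M'$ follow exactly as in Claims~1 and~2 of Proposition~\ref{prop:SOSH-intersect}, the sum rule \cite[Theorem~6.42]{RW98} then applies at every $y\in K\cap\mathbb{B}_{\delta}(\bar x)$, and your halfspace estimate at the projected point $y=P_{K}(x)$ is clean: the index $l^{*}$ with $\langle v_{l^{*}},x-y\rangle\ge d(x,K)/r>0$ necessarily has $v_{l^{*}}\neq 0$, so the division by $\|v_{l^{*}}\|$ is legitimate, and $d(x,K)\le rM'\max_{l}d(x,K_{l})$ follows.

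As for comparison with the paper: the paper does \emph{not} prove Lemma~\ref{lem:loc-metr-ineq-condn}. It records the result as well-known, notes that an original source is hard to pin down, and refers the reader to \cite{cut_Pang12} for a discussion of its proof. So there is no in-paper proof to compare against; your write-up supplies a complete, self-contained argument that the paper omits, and it does so by reusing machinery (the two claims in the proof of Proposition~\ref{prop:SOSH-intersect}) that the paper has already developed for a different purpose.
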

We now prove our convergence result for Algorithm \ref{alg:Mass-proj-alg}.
\begin{thm}
\label{thm:conv-SIP}(Convergence rates for the SIP) Suppose Algorithm
\ref{alg:Mass-proj-alg} with parameter $\bar{p}$ produces a sequence
$\{x_{i}\}$ that converges to a point $\bar{x}\in K$, and the convergence
is not finite. Suppose also that 
\begin{enumerate}
\item If $\sum_{l=1}^{r}v_{l}=0$ for some $v_{l}\in N_{K_{l}}(\bar{x})$,
then $v_{l}=0$ for all $l=1,\dots,r$.
\end{enumerate}
In view of condition (1) and Lemma \ref{lem:loc-metr-ineq-condn},
$\{K_{l}\}_{l=1}^{r}$ satisfies the local metric inequality at $\bar{x}$
with some constant, say $\bar{\kappa}$. Let $\bar{\alpha}$ be such
that 
\[
\bar{\alpha}<\sin^{-1}(1/\bar{\kappa}),
\]
and suppose that the parameter $\bar{p}$ in Algorithm \ref{alg:Mass-proj-alg}
is sufficiently large so that for any $\bar{p}$ unit vectors in $\mathbb{R}^{n}$,
there are two vectors such that the angle between them is at most
$\bar{\alpha}$. 

(a) For any $\epsilon>0$, there is an $I>0$ such that if $I<i<k$
and $\angle a_{j}^{(l)}0a_{k}^{(l_{k}^{*})}\leq\bar{\alpha}$ for
some $(j,l)\in S_{k-1}$, then 
\[
\frac{\|x_{k}-\bar{x}\|}{\|x_{i}-\bar{x}\|}\leq\frac{\|x_{k}-\bar{x}\|}{\|x_{j}-\bar{x}\|}\leq\epsilon.
\]

(b) Suppose in addition all the sets $K_{l}$ have the SOSH property
at $\bar{x}$. Then there are $M>0$ and $I>0$ such that if $I<i<k$
and $\angle a_{j}^{(l)}0a_{k}^{(l_{k}^{*})}\leq\bar{\alpha}$ for
some  $(j,l)\in S_{k-1}$, then 
\begin{equation}
\frac{\|x_{k}-\bar{x}\|}{\|x_{i}-\bar{x}\|^{2}}\leq\frac{\|x_{k}-\bar{x}\|}{\|x_{j}-\bar{x}\|^{2}}<M.\label{eq:p-term-quad-1}
\end{equation}
\end{thm}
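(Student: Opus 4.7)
The first inequality in both (a) and (b) follows directly from the Fej\'er attraction built into condition (2) of Step 2: for $I<i\leq j$ one has $\|x_j-\bar{x}\|\leq\|x_i-\bar{x}\|$, hence $\|x_k-\bar{x}\|/\|x_i-\bar{x}\|\leq\|x_k-\bar{x}\|/\|x_j-\bar{x}\|$. The tacit constraint $i\leq j$ is natural here: $(j,l)\in S_{k-1}$ forces $j\leq k-1$, and $i$ plays the role of an earlier large index so that Fej\'er monotonicity comparing $\rho_i$ and $\rho_j$ kicks in.

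For the substantive second inequality my plan is to unpack the hypothesis geometrically via Theorem \ref{thm:radiality} (for (a)) and the SOSH property (for (b)). Set $v_j:=a_j^{(l)}/\|a_j^{(l)}\|$, $v_k:=a_k^{(l_k^*)}/\|a_k^{(l_k^*)}\|$, and $\theta:=\angle(v_j,v_k)\leq\bar{\alpha}$. Since $x_k\in H_{(j,l)}^{\leq}$ and $\bar{x}\in K_l$, Theorem \ref{thm:radiality} applied to $v_j\in N_{K_l}(x_j^{(l)})$ gives $\langle v_j,x_k-\bar{x}\rangle\leq\epsilon'\|x_j-\bar{x}\|$ with $\epsilon'\to 0$ as $I\to\infty$, while applied to $v_k\in N_{K_{l_k^*}}(x_k^{(l_k^*)})$ it gives $\langle v_k,x_k-\bar{x}\rangle\geq\|a_k^{(l_k^*)}\|-\epsilon'\|x_k-\bar{x}\|$. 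Decomposing $v_k=\cos\theta\,v_j+\sin\theta\,u$ with $\|u\|=1$ and $u\perp v_j$, substituting, and using $|\langle u,x_k-\bar{x}\rangle|\leq\|x_k-\bar{x}\|$, these combine into the core estimate
\[
\|a_k^{(l_k^*)}\|\ \leq\ \epsilon'\|x_j-\bar{x}\|+(\sin\bar{\alpha}+\epsilon')\|x_k-\bar{x}\|.
\]

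To close, I would invoke Lemma \ref{lem:loc-metr-ineq-condn} (available from hypothesis (1)) to upgrade the left-hand side to $d(x_k,K)/\bar{\kappa}$, then use a Fej\'er-based absorption step along the trajectory between iterations $j$ and $k$ to turn the distance-to-$K$ bound into a bound on $\|x_k-\bar{x}\|$ itself. The assumption $\bar{\alpha}<\sin^{-1}(1/\bar{\kappa})$ enters precisely at this point: after multiplying through by $\bar{\kappa}$, the coefficient of $\|x_k-\bar{x}\|$ on the right is essentially $\bar{\kappa}\sin\bar{\alpha}<1$, which allows that term to be absorbed into the left, leaving a multiplier of $O(\epsilon')$ in front of $\|x_j-\bar{x}\|$; driving $\epsilon'\to 0$ by enlarging $I$ yields the stated $\rho_k/\rho_j\leq\epsilon$. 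Part (b) is parallel with the SOSH bound $|\langle v,\bar{x}-x\rangle|\leq M\|\bar{x}-x\|^2$ replacing the linear remainder, producing a core estimate $\|a_k^{(l_k^*)}\|\leq M'\|x_j-\bar{x}\|^2+\sin\bar{\alpha}\|x_k-\bar{x}\|+M'\|x_k-\bar{x}\|^2$ and, after the same absorption, the quadratic bound \eqref{eq:p-term-quad-1}. The hard part will be this absorption step, which is exactly the material the authors identify as ``largely unchanged'' from \cite{cut_Pang12}; I would cite it rather than re-derive it, and concentrate the new writeup on the angle-decomposition computation above that produces the core estimate from a \emph{single} pair $(j,l)\in S_{k-1}$ with small angle to $v_k$, replacing the earlier pigeonhole over all of $S_{k-1}$.
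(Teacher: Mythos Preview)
Your angle-decomposition computation is essentially the algebraic counterpart of the paper's two-dimensional geometric argument (Figure~\ref{fig:geom1} and inequalities \eqref{eq:ineq-A}--\eqref{eq:ineq-C}): both produce the same core estimate and both close by absorbing the $\sin\bar\alpha\,\|x_k-\bar x\|$ term using $\bar\kappa\sin\bar\alpha<1$. So the overall strategy is correct and matches the paper's route.

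One correction on the cited ingredient, since you misdescribe its mechanism. The passage from $\|a_k^{(l_k^*)}\|=\max_l d(x_k,K_l)$ to a lower bound for $\|x_k-\bar x\|$ is \emph{not} Fej\'er-based, nor is it an argument ``along the trajectory between $j$ and $k$''. What the paper imports from \cite{cut_Pang12} is the directional result \eqref{eq:conv-to-normal}, namely $d\bigl(\tfrac{x_i-\bar x}{\|x_i-\bar x\|},N_K(\bar x)\bigr)\to 0$; this relies on condition~(3) of Step~2 (the conic structure of $x_i-x_{i+1}$), outer semicontinuity of the normal-cone maps, and pointedness of $N_K(\bar x)$ via Lemma~\ref{lem:pointed-cone-modulus}. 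Fej\'er monotonicity by itself carries no directional information and cannot yield this. From \eqref{eq:conv-to-normal} one gets $\lim_i d(x_i,K)/\|x_i-\bar x\|=1$ and hence, combined with the local metric inequality, the single inequality \eqref{eq:approach-ineq}: $\|x_k-\bar x\|\le\kappa\,\|a_k^{(l_k^*)}\|$ for any fixed $\kappa>\bar\kappa$ and all large $k$. Plugging that directly into your core estimate and absorbing (with $\kappa$ chosen so that $\kappa\sin\bar\alpha<1$) finishes the proof; there is no need for a separate detour through $d(x_k,K)$.
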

\begin{proof}
We break up into two steps:

\textbf{\uline{Step 1:}}\textbf{ Summary of results largely unchanged
from \cite{cut_Pang12}. }

We summarize the results proved for the algorithm in \cite{cut_Pang12}
that still hold for Algorithm \ref{alg:Mass-proj-alg} with minor
modifications.

Since condition (1) holds, there is a constant $\bar{\kappa}$ satisfying
the local metric inequality \eqref{eq:loc-metric-ineq}. It was proved
in \cite{cut_Pang12} that if condition (1) is satisfied, then 
\begin{equation}
\lim_{i\to\infty}d\left(\frac{x_{i}-\bar{x}}{\|x_{i}-\bar{x}\|},N_{K}(\bar{x})\right)=0.\label{eq:conv-to-normal}
\end{equation}
The conclusion \eqref{eq:conv-to-normal} still holds true for Algorithm
\ref{alg:Mass-proj-alg} with exactly the same proof, using only the
weaker requirements of property (3) of step 2 of Algorithm \ref{alg:Mass-proj-alg}
and the outer semicontinuity of the normal mapping of a convex set.
The formula \eqref{eq:conv-to-normal} is the most tedious result
in \cite{cut_Pang12}. With the same steps as presented in \cite{cut_Pang12},
we can use \eqref{eq:conv-to-normal} to prove that $\lim_{i\to\infty}\frac{d(x_{i},K)}{\|x_{i}-\bar{x}\|}=1.$
In view of the local metric inequality \eqref{eq:loc-metric-ineq},
we can consider any $\kappa>\bar{\kappa}$ and get 
\begin{equation}
\|x_{i}-\bar{x}\|\leq\kappa\max_{1\leq l\leq r}d(x_{i},K_{l})\mbox{ for all }i\mbox{ large enough.}\label{eq:approach-ineq}
\end{equation}

\textbf{\uline{Step 2:}}\textbf{ Obtaining conclusions}

Consider the two dimensional affine space $\bar{x}+\spanm\{a_{j}^{(l)},a_{k}^{(l_{k}^{*})}\}$.
Let $x_{k}^{+}:=P_{K_{l_{k}^{*}}}(x_{k})$. Let the projection of
$x_{k}$ and $x_{k}^{+}$ onto this subspace be $\Pi x_{k}$ and $\Pi x_{k}^{+}$.
Let $x_{k}^{++}$ be the projection of $x_{k}$ onto the hyperplane
passing through $\bar{x}$ whose normal is $a_{k}^{(l_{k}^{*})}$,
and let $\Pi x_{k}^{++}$ be similarly defined. The points are indicated
in Figure \ref{fig:geom1}.

\begin{figure}[h]
\includegraphics[scale=0.5]{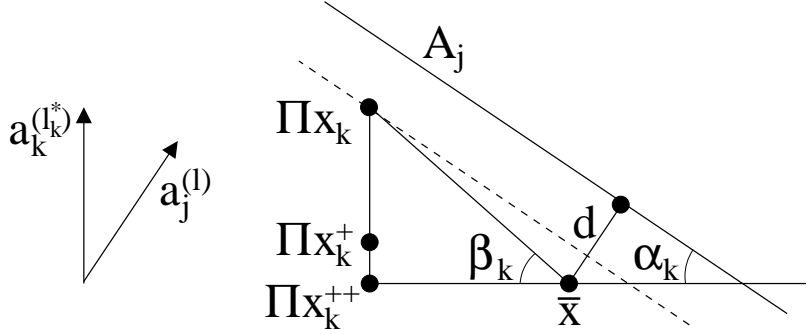}

\caption{\label{fig:geom1}We illustrate the points defined in step 2 of the
proof of Theorem \ref{thm:conv-SIP}. The directions (and not magnitudes)
of $a_{j}^{(l)}$ and $a_{k}^{(l_{k}^{*})}$ (which are the normal
vectors of the halfspaces obtained from projecting $x_{j}$ onto $K_{l}$
and $x_{k}$ onto $K_{l_{k}^{*}}$ respectively) are indicated on
the left. The distance $d$ equals $d(\bar{x},A_{j})$. By pulling
the hyperplane $A_{j}$ towards $\bar{x}$ till it hits $\Pi x_{k}$
(the dashed line), we can prove inequality \eqref{eq:ineq-C}.}
\end{figure}

It is clear that $\|x_{k}-x_{k}^{+}\|=\|x_{k}-P_{K_{l_{k}^{*}}}(x_{k})\|=d(x_{k},K_{l_{k}^{*}})$,
so from \eqref{eq:approach-ineq}, we have, for all $k$ large enough,
\begin{eqnarray}
\|\Pi x_{k}-\bar{x}\| & \leq & \|x_{k}-\bar{x}\|\nonumber \\
 & \leq & \kappa\|x_{k}-P_{K_{l_{k}^{*}}}(x_{k})\|\nonumber \\
 & = & \kappa\|\Pi x_{k}-\Pi x_{k}^{+}\|\nonumber \\
 & \leq & \kappa\|\Pi x_{k}-\Pi x_{k}^{++}\|.\label{eq:ineq-A}
\end{eqnarray}
Hence the angle $\angle(\Pi x_{k})\bar{x}(\Pi x_{k}^{++})$, which
is marked as $\beta_{k}$ in Figure \ref{fig:geom1}, satisfies $\beta_{k}=\sin^{-1}(1/\kappa)$.
Let $\bar{\beta}:=\liminf_{k\to\infty}\beta_{k}$. We must have $\bar{\beta}\geq\sin^{-1}(1/\bar{\kappa})$.
The angle $\alpha_{k}$ marked on Figure \ref{fig:geom1} equals $\angle a_{j}^{(l)}0a_{k}^{(l_{k}^{*})}$,
and satisfies $\alpha_{k}\leq\bar{\alpha}$ by the assumptions in
this result. The $\kappa$ can be chosen such that $\sin\bar{\alpha}<\frac{1}{\kappa}<\frac{1}{\bar{\kappa}}$
so that $\bar{\alpha}<\bar{\beta}$.

Now, let $A_{j}$ be the hyperplane produced by projecting $x_{j}$
onto $K_{l}$ (i.e., $A_{j}=\partial H_{(j,l)}^{\leq}$, the boundary
of $H_{(j,l)}^{\leq}$), and let $d_{j}$ be the distance $d(\bar{x},A_{j})$.
In view of Theorem \ref{thm:radiality}, for any $\epsilon>0$, we
can find $I$ large enough such that 
\begin{equation}
d(\bar{x},A_{j})\leq\epsilon\|P_{K_{l}}(x_{j})-\bar{x}\|\leq\epsilon\|x_{j}-\bar{x}\|\mbox{ for all }j>I.\label{eq:ineq-B}
\end{equation}
A simple argument in plane geometry elaborated in Figure \ref{fig:geom1}
gives, for all $j>I$, 
\begin{equation}
\|\Pi x_{k}-\Pi x_{k}^{++}\|\leq\frac{d(\bar{x},A_{j})}{\sin^{-1}(\beta_{k}-\alpha_{k})}\leq\frac{\epsilon\|x_{j}-\bar{x}\|}{\sin^{-1}(\beta_{k}-\alpha_{k})}.\label{eq:ineq-C}
\end{equation}
Combining inequalities \eqref{eq:ineq-A} and \eqref{eq:ineq-C},
we have, for $k>j>i>I$, 
\begin{eqnarray*}
\|x_{k}-\bar{x}\| & \leq & \kappa\|\Pi x_{k}-\Pi x_{k}^{++}\|\\
 & \leq & \kappa\frac{\epsilon\|x_{j}-\bar{x}\|}{\sin^{-1}(\beta_{k}-\alpha_{k})}\\
\Rightarrow\frac{\|x_{k}-\bar{x}\|}{\|x_{j}-\bar{x}\|} & \leq & \frac{\kappa\epsilon}{\sin^{-1}(\bar{\beta}-\bar{\alpha})}.
\end{eqnarray*}
In the case where $K_{l}$ have the SOSH property at $\bar{x}$, we
can replace the $\epsilon\|x_{j}-\bar{x}\|$ in \eqref{eq:ineq-B}
by $M\|x_{j}-\bar{x}\|^{2}$. Reworking through the inequalities gives
\[
\frac{\|x_{k}-\bar{x}\|}{\|x_{j}-\bar{x}\|^{2}}\leq\frac{\kappa M}{\sin^{-1}(\bar{\beta}-\bar{\alpha})}\mbox{ if }k>j>i>I.
\]
In view of the fact that $\|x_{j}-\bar{x}\|\leq\|x_{i}-\bar{x}\|$
from condition (2) of Step 2 of Algorithm \ref{alg:Mass-proj-alg},
we have the result we need. 
\end{proof}
We list a few corollaries that are straightforward from Theorem \ref{thm:conv-SIP}.
The $\bar{p}$-term superlinear convergence in Corollary \ref{cor:p-term-suplin}
was the original conclusion in \cite{cut_Pang12}. Corollary \ref{cor:fast-sip}
shows that the convergence can be much faster for smooth problems.
\begin{cor}
\label{cor:p-term-suplin}($\bar{p}$-term superlinear convergence
for SIP) Suppose Algorithm \ref{alg:Mass-proj-alg} with parameter
$\bar{p}$ produces a sequence $\{x_{i}\}$ that converges to a point
$\bar{x}\in K$, and the convergence is not finite, and condition
(1) of Theorem \ref{thm:conv-SIP} holds. 

From condition (1) and Lemma \ref{lem:loc-metr-ineq-condn}, $\{K_{l}\}_{l=1}^{r}$
satisfies the local metric inequality at $\bar{x}$ with some constant,
say $\bar{\kappa}$. Let $\bar{\alpha}$ be such that 
\[
\bar{\alpha}<\sin^{-1}(1/\bar{\kappa}),
\]
and suppose that the parameter $\bar{p}$ in Algorithm \ref{alg:Mass-proj-alg}
is sufficiently large so that for any $\bar{p}$ unit vectors in $\mathbb{R}^{n}$,
there are two vectors such that the angle between them is at most
$\bar{\alpha}$. Suppose also that $S_{i}=\{\max(1,i-\bar{p}),\dots,i\}\times\{1,\dots,r\}$
(which implies that $S_{i}$ has $(\bar{p}+1)r$ elements). Then $\{x_{i}\}$
converge $\bar{p}$-term superlinearly to $\bar{x}$, i.e., 
\begin{equation}
\lim_{i\to\infty}\frac{\|x_{i+\bar{p}}-\bar{x}\|}{\|x_{i}-\bar{x}\|}=0.\label{eq:p-term}
\end{equation}
If in addition the sets $K_{l}$ have the SOSH property at $\bar{x}$
for all $l\in\{1,\dots,r\}$, then $\{x_{i}\}$ converge $\bar{p}$-term
quadratically to $\bar{x}$, i.e., 
\begin{equation}
\limsup_{i\to\infty}\frac{\|x_{i+\bar{p}}-\bar{x}\|}{\|x_{i}-\bar{x}\|^{2}}<\infty.\label{eq:p-term-1}
\end{equation}

\begin{cor}
\label{cor:fast-sip}(Fast convergence for smooth SIP) Suppose Algorithm
\ref{alg:Mass-proj-alg} with parameter $\bar{p}$ is such that $S_{i}=\{i\}\times\{1,\dots,r\}$
(which implies that $S_{i}$ has $r$ elements) produces a sequence
$\{x_{i}\}$ that converges to a point $\bar{x}\in K$, and the convergence
is not finite, and condition (1) of Theorem \ref{thm:conv-SIP} holds.
If $N_{K_{l}}(\bar{x})$ contains only one nonzero direction for all
$l\in\{1,\dots,r\}$, then the convergence of $\{x_{i}\}$ to $\bar{x}$
is superlinear, i.e., $\lim_{i\to\infty}\frac{\|x_{i+1}-\bar{x}\|}{\|x_{i}-\bar{x}\|}=0$.
If in addition the sets $K_{l}$ have the SOSH property at $\bar{x}$,
then the convergence of $\{x_{i}\}$ to $\bar{x}$ is quadratic, i.e.,
$\limsup_{i\to\infty}\frac{\|x_{i+1}-\bar{x}\|}{\|x_{i}-\bar{x}\|^{2}}<\infty$.
\end{cor}
\end{cor}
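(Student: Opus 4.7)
The plan is to derive both assertions directly from Theorem \ref{thm:conv-SIP} by using the $\bar{p}$-vector angle hypothesis to locate, within every window of $\bar{p}+1$ consecutive iterations, a pair of indices whose halfspace normals are nearly parallel, and then transferring the conclusion of the theorem to the ratios appearing in \eqref{eq:p-term} and \eqref{eq:p-term-1} via the Fej\'er monotonicity built into Step 2 of Algorithm \ref{alg:Mass-proj-alg}.

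Concretely, fix $i$ larger than the threshold $I$ furnished by Theorem \ref{thm:conv-SIP}(a) and also larger than $\bar{p}$. Since the algorithm does not terminate finitely, for every index $s$ the vector $a_s^{(l_s^*)}$ is nonzero (otherwise $x_s$ would lie in every $K_l$ by the definition of $l_s^*$), so the unit direction $\hat{a}_s := a_s^{(l_s^*)}/\|a_s^{(l_s^*)}\|$ is well defined. Applying the pigeonhole hypothesis on $\bar{p}$ to the $\bar{p}+1$ unit vectors $\hat{a}_i,\hat{a}_{i+1},\dots,\hat{a}_{i+\bar{p}}$ in $\mathbb{R}^n$, we can select indices $s_1<s_2$ in $\{i,\dots,i+\bar{p}\}$ with $\angle\hat{a}_{s_1}\,0\,\hat{a}_{s_2}\le\bar{\alpha}$. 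Set $k:=s_2$, $j:=s_1$ and $l:=l_{s_1}^*$. Because $0<s_2-s_1\le\bar{p}$, the index $j$ lies in $\{k-1-\bar{p},\dots,k-1\}$, which (for $i>\bar{p}$) equals $\{\max(1,k-1-\bar{p}),\dots,k-1\}$, so $(j,l)\in S_{k-1}$ under the choice of $S_{k-1}$ in the hypothesis. The angle condition $\angle a_j^{(l)}\,0\,a_k^{(l_k^*)}\le\bar{\alpha}$ holds by construction.

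Apply Theorem \ref{thm:conv-SIP}(a), taking the theorem's generic index ``$i$'' to be our $j$; since $j\ge i>I$ and $j<k$, the hypotheses are met, and one obtains $\|x_k-\bar{x}\|/\|x_j-\bar{x}\|\le\epsilon$ for any prescribed $\epsilon>0$ once $i$ is large enough. Fej\'er monotonicity (item (2) of Step 2 of Algorithm \ref{alg:Mass-proj-alg}) applied at the point $\bar{x}\in K$ yields $\|x_{i+\bar{p}}-\bar{x}\|\le\|x_k-\bar{x}\|$ (because $k\le i+\bar{p}$) and $\|x_j-\bar{x}\|\le\|x_i-\bar{x}\|$ (because $j\ge i$). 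Chaining these three inequalities gives
\[
\frac{\|x_{i+\bar{p}}-\bar{x}\|}{\|x_i-\bar{x}\|}\;\le\;\frac{\|x_k-\bar{x}\|}{\|x_j-\bar{x}\|}\;\le\;\epsilon,
\]
and since $\epsilon$ is arbitrary this establishes \eqref{eq:p-term}. Under the additional SOSH hypothesis the same selection together with Theorem \ref{thm:conv-SIP}(b) yields a fixed constant $M$ with $\|x_k-\bar{x}\|/\|x_j-\bar{x}\|^2<M$, and the same Fej\'er chain delivers $\|x_{i+\bar{p}}-\bar{x}\|/\|x_i-\bar{x}\|^2<M$, proving \eqref{eq:p-term-1}.

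The core step is the pigeonhole selection of $(s_1,s_2)$ among the $\bar{p}+1$ normals produced in iterations $i,\dots,i+\bar{p}$; the rest is bookkeeping. No serious obstacle is expected, beyond verifying membership $(j,l)\in S_{k-1}$ (which forces the requirement $s_2-s_1\le\bar{p}$ to match the window length of $S_{k-1}$) and the automatic ordering $j\ge i>I$ that allows Theorem \ref{thm:conv-SIP} to be invoked at $j$ rather than at $i$.
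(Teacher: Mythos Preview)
Your argument for Corollary~\ref{cor:p-term-suplin} is correct and is precisely the pigeonhole derivation the paper has in mind when it says the corollaries are ``straightforward from Theorem~\ref{thm:conv-SIP}'': among the $\bar p+1$ unit normals $\hat a_i,\dots,\hat a_{i+\bar p}$ two must be within angle $\bar\alpha$, the earlier index lands in $S_{k-1}$ by the window choice, and then Theorem~\ref{thm:conv-SIP} together with Fej\'er monotonicity at $\bar x$ bounds the ratio over the full window. (A cosmetic point: the hypothesis is stated for any $\bar p$ unit vectors, so strictly speaking you apply it to $\hat a_i,\dots,\hat a_{i+\bar p-1}$; the conclusion is the same.)

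Note, however, that the paper does not write out a proof of Corollary~\ref{cor:p-term-suplin} at all; the proof that follows the pair of corollaries is for Corollary~\ref{cor:fast-sip}, which you do not address. There the mechanism is different and does not use the pigeonhole hypothesis on $\bar p$: since each $N_{K_l}(\bar x)$ is a single ray, outer semicontinuity of the normal-cone map forces the unit normals $a_i^{(l)}/\|a_i^{(l)}\|$ to converge to that ray for every $l$, so for any $\bar\alpha>0$ one eventually has $\angle a_i^{(l_{i+1}^*)}\,0\,a_{i+1}^{(l_{i+1}^*)}<\bar\alpha$. With $S_i=\{i\}\times\{1,\dots,r\}$ this gives $(j,l)=(i,l_{i+1}^*)\in S_{(i+1)-1}$, and Theorem~\ref{thm:conv-SIP} applied with $k=i+1$, $j=i$ yields the one-step superlinear (respectively quadratic) estimate. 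If your proposal is meant to cover both corollaries, this second half is missing.
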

\begin{proof}
Due to the fact that the graph of the normal cone mapping $N_{C}:\mathbb{R}^{n}\rightrightarrows\mathbb{R}^{n}$
is closed for any closed convex set $C\subset\mathbb{R}^{n}$, the
unit vectors of the normals obtained by projecting $x_{i}$ onto each
$K_{l}$ converge to the only direction of unit length in $N_{K_{l}}(\bar{x})$
for $l\in\{1,\dots,r\}$. 

We now examine Statement (a) of Theorem \ref{thm:conv-SIP}. Choose
any $\bar{\alpha}>0$. As a consequence of the outer semicontinuity
of the mapping $N_{K_{l}}:\mathbb{R}^{n}\rightrightarrows\mathbb{R}^{n}$
at $\bar{x}$ for all $l\in\{1,\dots,r\}$, there is an $I_{1}$ such
that if $i>I_{1}$, then $\angle a_{i}^{(l_{i+1}^{*})}0a_{i+1}^{(l_{i+1}^{*})}<\bar{\alpha}$.
Hence for any $\epsilon>0$, we can increase $I_{1}$ if necessary
so that if $i>I_{1}$, then $\frac{\|x_{i+1}-\bar{x}\|}{\|x_{i}-\bar{x}\|}\leq\epsilon$.
A similar conclusion holds for quadratic convergence.
\end{proof}
We make another remark about higher order $\bar{p}$-term convergence.
\begin{rem}
(Higher order $\bar{p}$-term convergence) We note that $\bar{p}$-term
quadratic convergence \eqref{eq:p-term-1} implies 
\begin{equation}
\limsup_{i\to\infty}\frac{\|x_{i+2\bar{p}}-\bar{x}\|}{\|x_{i}-\bar{x}\|^{4}}<\infty,\label{eq:quartic-conv}
\end{equation}
which would be $(2\bar{p})$-term quartic convergence. So if we do
not have bounds for the parameter $\bar{p}$, then the degree of the
denominator of the term in \eqref{eq:quartic-conv} can be set arbitrarily
high, and the limit superior can also taken to be zero. It is therefore
important to bound $\bar{p}$. 
\begin{rem}
(Algorithmic consequences of Theorem \ref{thm:conv-SIP}) An insight
obtained from Theorem \ref{thm:conv-SIP} for how an algorithm might
run in practice is that after a few iterations, the unit normal vectors
of the halfspaces generated by the projection process can be compared.
If a unit normal vector of an old halfspace is close enough to a newer
one, then the old halfspace can be removed for the next QP subproblem
of projecting onto a polyhedron.
\end{rem}
\end{rem}

\section{\label{sec:CIP}A subgradient algorithm for the CIP}

In this section, we show how the ideas for the SIP can be transferred
to the CIP. We write down the SGQP (subgradient quadratic programming)
algorithm for solving the CIP \eqref{eq:CIP}.
\begin{algorithm}
\label{thm:SHQP-alg}(SGQP algorithm for the CIP) For a convex function
$f:\mathbb{R}^{n}\to\mathbb{R}$ and starting iterate $x_{0}$, we
find a point $\bar{x}$ such that $f(\bar{x})\leq0$.

\textbf{\uline{Step 0}}: Set $i=0$ and let  $\bar{p}$ be a fixed
positive integer.

\textbf{\uline{Step 1}}: Find $y_{i}\in\partial f(x_{i})$ and
define the halfspace $H_{i}^{\leq}\subset\mathbb{R}^{n}$ by 
\begin{equation}
H_{i}^{\leq}:=\{x\in\mathbb{R}^{n}\mid f(x_{i})+\left\langle y_{i},x-x_{i}\right\rangle \leq0\}.\label{eq:H_i^leq}
\end{equation}

\textbf{\uline{Step 2}}: Find $x_{i+1}$ such that 

$\qquad$(1) $x_{i+1}\in F_{i}$, where $S_{i}$ is a subset of $\{\max(i-\bar{p},0),\dots,i\}$
and ${F_{i}:=\cap_{k\in S_{i}}H_{k}^{\leq}}$.

$\qquad$(2) $\|x_{i+1}-c\|\leq\|x_{i}-c\|$ for all $c\in f^{-1}((-\infty,0])$.

$\qquad$(3) $x_{i}-x_{i+1}$ lies in $\conv\{y_{k}\}_{k\in S_{i}}$,
i.e., $x_{i}-x_{i+1}$ lies in the cone generated by the convex hull
of the normals of the halfspaces $\{H_{k}^{\leq}\}_{k\in S_{i}}$. 

\textbf{\uline{Step 3}}: Set $i\leftarrow i+1$, and go back to
step 1.
\end{algorithm}
When $S_{i}\equiv\{i\}$ in Algorithm \ref{thm:SHQP-alg}, the iterate
$x_{i+1}$ can be calculated to be $x_{i+1}=x_{i}-\frac{f(x_{i})}{\|y_{i}\|^{2}}y_{i}$.
 It is easy to check that Algorithm \ref{thm:SHQP-alg} converges
linearly when $S_{i}\equiv\{i\}$ for $f:\mathbb{R}^{2}\to\mathbb{R}$
defined by $f(x_{1},x_{2})=\max(2x_{1}-x_{2},2x_{2}-x_{1})$ exhibits
the zigzagging behavior typical of alternating projections, but converges
in finitely many iterations when $S_{i}\supset\{i,i-1\}$ for large
$i$.

Remark \ref{rem:step-2-relax} also applies to Step 2 of Algorithm
\ref{thm:SHQP-alg}; We can take $x_{i+1}$ to be the projection of
$x_{i}$ onto $F_{i}$ in Step 2 of \ref{thm:SHQP-alg}.
\begin{thm}
(Basic convergence for the CIP) Suppose $f:\mathbb{R}^{n}\to\mathbb{R}$
is convex, and $f^{-1}((-\infty,0])\neq\emptyset$. Then Algorithm
\ref{thm:SHQP-alg} for any parameter $\bar{p}$ and $S_{i}\supset\{i\}$
converges to a point $\bar{x}$ such that $f(\bar{x})\leq0$.\end{thm}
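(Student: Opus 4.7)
My plan is to use condition (2) to put $\{x_i\}$ into the framework of a Fej\'er monotone sequence with respect to $C:=f^{-1}((-\infty,0])$, and therefore bounded by Theorem~\ref{thm:Fejer-ppty}(1); then leverage condition (1) together with a projection-style argument to show $f(x_i)^{+}\to 0$; and finally invoke the standard Fej\'er convergence mechanism to pass from having a cluster point in $C$ to strong convergence of the full sequence. Since $f:\mathbb{R}^n\to\mathbb{R}$ is convex it is continuous and locally Lipschitz, so on any bounded set containing $\{x_i\}$ there is a uniform constant $L$ with $\|y_i\|\leq L$ for every $i$.

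The first substantive step is a step-size lower bound coming from condition (1): the defining inequality $f(x_i)+\langle y_i,x_{i+1}-x_i\rangle\leq 0$ rearranges to $\langle y_i,x_i-x_{i+1}\rangle\geq f(x_i)^{+}$, and Cauchy--Schwarz gives $\|x_i-x_{i+1}\|\geq f(x_i)^{+}/L$. So if I can establish $\|x_i-x_{i+1}\|\to 0$, then $f(x_i)^{+}\to 0$, and continuity of $f$ guarantees that every cluster point $\bar x$ of the bounded sequence $\{x_i\}$ satisfies $f(\bar x)\leq 0$, i.e.\ $\bar x\in C$.

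For $\|x_i-x_{i+1}\|\to 0$ I would use the canonical choice $x_{i+1}=P_{F_i}(x_i)$ highlighted just above the theorem statement: it satisfies (1)--(3), and by the Fej\'er attraction property of Theorem~\ref{thm:Fejer-contraction} (with $\lambda=1$) together with $C\subset F_i\subset H_i^{\leq}$,
\[
\|x_{i+1}-c\|^2\leq\|x_i-c\|^2-d(x_i,F_i)^2\leq\|x_i-c\|^2-(f(x_i)^{+})^2/L^2
\]
for any $c\in C$, where the second inequality uses $d(x_i,F_i)\geq d(x_i,H_i^{\leq})=f(x_i)^{+}/\|y_i\|$. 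Telescoping yields $\sum_i(f(x_i)^{+})^2<\infty$, hence $f(x_i)^{+}\to 0$. Finally, given any cluster point $\bar x\in C$, condition (2) forces $\|x_i-\bar x\|$ to be nonincreasing, and since some subsequence converges to $0$ the whole sequence does, so $x_i\to\bar x$, as claimed.

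The hard part will be accommodating a general $x_{i+1}$ satisfying (1)--(3) rather than the projection, because the quantitative decrement $(f(x_i)^{+})^2/L^2$ need not survive for other choices (for instance, the ``extreme'' relaxation $\lambda_i=2\inf_{c\in C}\langle y_i,x_i-c\rangle/\|y_i\|^2$ can give zero decrement against the minimizing $c$). My strategy there is to expand $\|x_i-c\|^2-\|x_{i+1}-c\|^2=2\langle x_i-x_{i+1},x_i-c\rangle-\|x_i-x_{i+1}\|^2$, use condition (3) to write $x_i-x_{i+1}=\sum_{k\in S_i}\lambda_k y_k$ with $\lambda_k\geq 0$ and $i\in S_i$, and exploit the halfspace inclusions $\langle y_k,c-x_k\rangle\leq -f(x_k)$ valid for every $c\in C$ and $k\in S_i$ to reinstate a lower bound on the right-hand side of the form $\alpha (f(x_i)^{+})^2$ with a universal $\alpha>0$.
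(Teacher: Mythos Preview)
Your argument for the canonical choice $x_{i+1}=P_{F_i}(x_i)$ is correct and rests on the same mechanism as the paper: the projection inequality $d(x_{i+1},C)^2\leq d(x_i,C)^2-\|x_i-x_{i+1}\|^2$ combined with the lower bound $\|x_i-x_{i+1}\|\geq d(x_i,H_i^{\leq})=f(x_i)/\|y_i\|$. The only organizational difference is that the paper argues by contradiction---it supposes a cluster point $\bar x$ with $f(\bar x)>0$, applies the decrement once near $\bar x$ to force $d(x_j,C)$ strictly below $d(\bar x,C)$ for all later $j$, and contradicts $\bar x$ being a cluster point---whereas you telescope the decrement globally to obtain $\sum_i (f(x_i)^+)^2<\infty$ directly. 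Both routes use the global Lipschitz bound on $\partial f$ over the bounded Fej\'er sequence in the same way.

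As for your final paragraph: the paper's own proof also silently assumes $x_{i+1}$ is the projection onto $F_i$ (note the sentence ``Since $x_{i+1}$ is the projection of $x_i$ onto a set containing $C$\ldots''), so the obtuse-angle inequality $\langle x_i-x_{i+1},P_C(x_i)-x_{i+1}\rangle\leq 0$ is simply taken for granted rather than derived from conditions (1)--(3). Your caution about the general case is therefore more scrupulous than the paper itself, but it is not needed to match what the paper proves; and your proposed route through condition (3) is unlikely to yield a clean universal constant $\alpha>0$, since the cross terms $\langle y_k,x_i-x_k\rangle$ for $k<i$ are not controlled by the halfspace inclusions alone.
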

\begin{proof}
By Theorem \ref{thm:Fejer-ppty}(1), the sequence of iterates $\{x_{i}\}$
is bounded, and has a convergent subsequence. Suppose it has a cluster
point $\bar{x}$. Seeking a contradiction, suppose $f(\bar{x})>0$.
Since $0\notin\partial f(\bar{x})$, let $\gamma:=\sup\{\|y\|:y\in\partial f(\bar{x})\}$.
If $x_{i}$ is sufficiently close to $\bar{x}$, then for the choice
$y_{i}\in\partial f(x_{i})$, the distance of $x_{i}$ to the halfspace
$H_{i}^{\leq}$ defined in \eqref{eq:H_i^leq} is $f(x_{i})/\|y_{i}\|$.
By the outer semicontinuity of the subdifferential and the continuity
of $f(\cdot)$, the value $f(x_{i})/\|y_{i}\|$ is in turn bounded
from below by $\frac{f(\bar{x})}{2\gamma}$ if $x_{i}$ is sufficiently
close to $\bar{x}$. This implies that 
\begin{equation}
\|x_{i}-x_{i+1}\|\geq\frac{f(\bar{x})}{2\gamma}.\label{eq:succ-iter-l-bdd}
\end{equation}

We simplify the statements in the proof by letting $C$ to be $f^{-1}((-\infty,0])$.
Consider iterates $x_{i}$ and $x_{i+1}$. Since $x_{i+1}$ is the
projection of $x_{i}$ onto a set containing $C$, we have $\left\langle x_{i}-x_{i+1},P_{C}(x_{i})-x_{i+1}\right\rangle \leq0$.
This inequality implies that 
\[
\|x_{i}-x_{i+1}\|^{2}+\|x_{i+1}-P_{C}(x_{i})\|^{2}\leq\|x_{i}-P_{C}(x_{i})\|^{2},
\]
which in turn gives 
\begin{eqnarray*}
d(x_{i+1},C)^{2} & \leq & \|x_{i+1}-P_{C}(x_{i})\|^{2}\\
 & \leq & \|x_{i}-P_{C}(x_{i})\|^{2}-\|x_{i}-x_{i+1}\|^{2}\\
 & = & d(x_{i},C)^{2}-\|x_{i}-x_{i+1}\|^{2}.
\end{eqnarray*}
It follows from the continuity of $d(\cdot,C)$ and \eqref{eq:succ-iter-l-bdd}
that if $x_{i}$ were sufficiently close to $\bar{x}$, then $d(x_{i+1},C)^{2}\leq d(\bar{x},C)^{2}-[\frac{f(\bar{x})}{2\gamma}]^{2}$.
This fact and Theorem \ref{thm:Fejer-ppty}(1) contradicts the assumption
that $\bar{x}$ is a cluster point of $\{x_{i}\}$. Therefore, the
cluster points of $\{x_{i}\}$ must belong to $C$. By Theorem \ref{thm:Fejer-ppty}(2),
we conclude that $\{x_{i}\}$ converges to some point $\bar{x}$ in
$C$. In other words, $\{x_{i}\}$ converges to some $\bar{x}$ such
that $f(\bar{x})\leq0$. 
\end{proof}
We now prove a few intermediate inequalities useful for Theorem \ref{thm:superlin-CIP}.
\begin{lem}
\label{lem:2-int-results}(Intermediate inequalities) Let $f:\mathbb{R}^{n}\to\mathbb{R}$
be a convex function. Choose $\bar{x}\in\mathbb{R}^{n}$ such that
$f(\bar{x})=0$ and $0\notin\partial f(\bar{x})$. Let $\gamma_{1}$
be such that $\gamma_{1}<d(0,\partial f(\bar{x}))$ and $\gamma_{2}<\frac{d(0,\partial f(\bar{x}))}{\sup_{y\in\partial f(\bar{x})}\|y\|}$.
Then there is some $\epsilon>0$ such that for all $x$ such that
$\|x-\bar{x}\|<\epsilon$, $f(x)>0$ and $d\big(\frac{x-\bar{x}}{\|x-\bar{x}\|},N_{f^{-1}((-\infty,0])}(\bar{x})\big)<\epsilon$,
we have 
\begin{enumerate}
\item $\frac{f(x)}{\|x-\bar{x}\|}\geq\gamma_{1}$. 
\item $\left\langle \frac{y}{\|y\|},\frac{x-\bar{x}}{\|x-\bar{x}\|}\right\rangle >\gamma_{2}$
for all $y\in\partial f(x)$. 
\end{enumerate}
\end{lem}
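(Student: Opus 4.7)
The plan is to reduce both conclusions to the subgradient inequality. Set $d := d(0,\partial f(\bar x))$ and $M := \sup_{y\in\partial f(\bar x)}\|y\|$, so the hypotheses read $\gamma_1 < d$ and $\gamma_2 M < d$. Since $f(\bar x)=0$ and $0\notin\partial f(\bar x)$, standard convex analysis gives $N_{f^{-1}((-\infty,0])}(\bar x)=\pos(\partial f(\bar x))$. Fix once and for all auxiliary constants $\gamma_1'\in(\gamma_1,d)$ and $\delta'>0$ small enough that $\gamma_1'/(M+\delta')>\gamma_2$; such a choice is possible because $\gamma_1<d$ and $\gamma_2M<d$.

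For (1), abbreviate $v := (x-\bar x)/\|x-\bar x\|$ and $C := f^{-1}((-\infty,0])$. The distance condition $d(v,N_C(\bar x))<\epsilon$ produces some $w\in\pos(\partial f(\bar x))$ with $\|v-w\|<\epsilon$; for $\epsilon$ small we have $w\neq 0$ (using $0\notin\partial f(\bar x)$), so we may write $w=ty_0$ for some $t>0$ and $y_0\in\partial f(\bar x)$, whence $w/\|w\|=y_0/\|y_0\|$. A routine triangle-inequality argument then shows that the unit vector $y_0/\|y_0\|$ lies within distance $2\epsilon$ of $v$, so $\langle y_0/\|y_0\|,v\rangle\ge 1-2\epsilon^2$. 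The subgradient inequality at $\bar x$ now gives $f(x)\ge\langle y_0,x-\bar x\rangle=\|y_0\|\,\|x-\bar x\|\,\langle y_0/\|y_0\|,v\rangle\ge d(1-2\epsilon^2)\,\|x-\bar x\|$, which exceeds $\gamma_1'\|x-\bar x\|$ (and hence $\gamma_1\|x-\bar x\|$) once $\epsilon$ is sufficiently small.

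For (2), apply the subgradient inequality at $x$: for every $y\in\partial f(x)$, $\langle y,x-\bar x\rangle\ge f(x)-f(\bar x)=f(x)$. Dividing through by $\|y\|\,\|x-\bar x\|$ and invoking the strengthened form of (1) (with $\gamma_1'$ in place of $\gamma_1$) gives $\langle y/\|y\|,v\rangle\ge\gamma_1'/\|y\|$. It therefore suffices to ensure $\|y\|\le M+\delta'$ for every $y\in\partial f(x)$ when $x$ is close to $\bar x$, because then $\langle y/\|y\|,v\rangle\ge\gamma_1'/(M+\delta')>\gamma_2$. This is exactly upper semicontinuity at $\bar x$ of the map $x\mapsto\sup_{y\in\partial f(x)}\|y\|$, which for convex $f$ on $\mathbb{R}^n$ is a standard consequence of local boundedness and outer semicontinuity of $\partial f$: any sequence $y_n\in\partial f(x_n)$ with $x_n\to\bar x$ and $\|y_n\|\ge M+\delta'$ would, by local boundedness, admit a subsequential limit $y^\ast\in\partial f(\bar x)$ of norm at least $M+\delta'$, contradicting the definition of $M$. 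Taking $\epsilon$ to be the minimum of the thresholds produced in the two parts completes the proof.

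The main obstacle I expect is the geometric bookkeeping in (1): extracting a bona fide $y_0\in\partial f(\bar x)\setminus\{0\}$ from the normal-cone distance condition (rather than just a generic element of $\pos(\partial f(\bar x))$), ruling out $y_0=0$ via $0\notin\partial f(\bar x)$, and quantifying how the angle between $y_0/\|y_0\|$ and $v$ scales with $\epsilon$. Once (1) is in hand, (2) follows easily by combining the subgradient inequality at $x$ with well-known semicontinuity properties of $\partial f$ for convex functions on $\mathbb{R}^n$.
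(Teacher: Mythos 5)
Your proof is correct and follows essentially the same route as the paper: conclusion (1) from the subgradient inequality at $\bar{x}$ together with the identification $N_{f^{-1}((-\infty,0])}(\bar{x})=\pos(\partial f(\bar{x}))$ and the closeness of $\frac{x-\bar{x}}{\|x-\bar{x}\|}$ to a normalized subgradient, and conclusion (2) from the subgradient inequality at $x$ plus the bound $\|y\|\leq\sup_{y'\in\partial f(\bar{x})}\|y'\|+\delta'$ via outer semicontinuity and local boundedness of $\partial f$. Your explicit introduction of the strengthened constant $\gamma_{1}'$ with $\gamma_{1}'/(M+\delta')>\gamma_{2}$ is only a more careful bookkeeping of a step the paper leaves implicit, not a different argument.
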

\begin{proof}
By the convexity of $f(\cdot)$, we have 
\begin{eqnarray}
f(x) & \geq & f(\bar{x})+\sup_{z\in\partial f(\bar{x})}\left\langle z,x-\bar{x}\right\rangle \nonumber \\
\frac{f(x)-f(\bar{x})}{\|x-\bar{x}\|} & \geq & \sup_{z\in\partial f(\bar{x})}\left\langle z,\frac{x-\bar{x}}{\|x-\bar{x}\|}\right\rangle .\label{eq:lem-ineq2}
\end{eqnarray}
Since $d\big(\frac{x-\bar{x}}{\|x-\bar{x}\|},N_{f^{-1}((-\infty,0])}(\bar{x})\cap\partial\mathbb{B}\big)<\epsilon$,
we can find $v$ such that $\|v\|=1$, $\|v-\frac{x-\bar{x}}{\|x-\bar{x}\|}\|<\epsilon$
and $v\in N_{f^{-1}((-\infty,0])}(\bar{x})$. Let $w=v-\frac{x-\bar{x}}{\|x-\bar{x}\|}$.
We now make use of the well known fact that $\lip f(\bar{x})=\max_{z\in\partial f(\bar{x})}\|z\|$.
For any $z\in\partial f(\bar{x})$, we have 
\[
-\left\langle z,w\right\rangle \geq-\|w\|\lip f(\bar{x})\geq-\epsilon\,\lip f(\bar{x}).
\]
Since $N_{f^{-1}((-\infty,0])}(\bar{x})=\pos(\partial f(\bar{x}))$
by \cite[Proposition 10.3]{RW98}, there is some $z^{\prime}\in\partial f(\bar{x})$
such that $v=\frac{z^{\prime}}{\|z^{\prime}\|}$. We have 
\begin{eqnarray}
\sup_{z\in\partial f(\bar{x})}\left\langle z,\frac{x-\bar{x}}{\|x-\bar{x}\|}\right\rangle  & = & \sup_{z\in\partial f(\bar{x})}[\left\langle z,v\right\rangle -\left\langle z,w\right\rangle ]\nonumber \\
 & \geq & \sup_{z\in\partial f(\bar{x})}[\left\langle z,v\right\rangle -\epsilon\,\lip f(\bar{x})]\nonumber \\
 & \geq & \frac{\left\langle z^{\prime},z^{\prime}\right\rangle }{\|z^{\prime}\|}-\epsilon\,\lip f(\bar{x})\nonumber \\
 & = & \|z^{\prime}\|-\epsilon\,\lip f(\bar{x})\nonumber \\
 & \geq & d\big(0,\partial f(\bar{x})\big)-\epsilon\,\lip f(\bar{x}).\label{eq:lem-ineq3}
\end{eqnarray}
Since $0\notin\partial f(\bar{x})$, $d(0,\partial f(\bar{x}))>0$.
We can reduce $\epsilon>0$ if necessary, so by combining \eqref{eq:lem-ineq2}
and \eqref{eq:lem-ineq3}, we have conclusion (1). 

Let $y\in\partial f(x)$. We have 
\begin{eqnarray}
f(\bar{x}) & \geq & f(x)+\left\langle y,\bar{x}-x\right\rangle \nonumber \\
\Rightarrow\left\langle \frac{y}{\|y\|},\frac{x-\bar{x}}{\|x-\bar{x}\|}\right\rangle  & \geq & \frac{f(x)}{\|y\|\|x-\bar{x}\|}.\label{eq:lem-ineq1}
\end{eqnarray}

The outer semicontinuity of $\partial f(\cdot)$ implies that for
any $\delta>0$, we can reduce $\epsilon$ if necessary so that $\|y\|\leq\sup_{y^{\prime}\in\partial f(\bar{x})}\|y^{\prime}\|+\delta$
whenever $y\in\partial f(x)$ and $\|x-\bar{x}\|\leq\epsilon$. By
combining this observation to \eqref{eq:lem-ineq1} together with
conclusion (1), we get conclusion (2).
\end{proof}
We present our result on the convergence of the CIP. 
\begin{thm}
\label{thm:superlin-CIP}(Convergence rates for the CIP) Suppose that
Algorithm \ref{thm:SHQP-alg} with parameter $\bar{p}$ for a convex
function $f:\mathbb{R}^{n}\to\mathbb{R}$ produces a sequence $\{x_{i}\}$
that converges to a point $\bar{x}\in f^{-1}(0)$ such that $0\notin\partial f(\bar{x})$,
and the convergence to $\bar{x}$ is not finite. Let $\bar{\alpha}>0$
be such that 
\[
\bar{\alpha}<\sin^{-1}\left(\frac{d(0,\partial f(\bar{x}))}{\sup_{y\in\partial f(\bar{x})}\|y\|}\right).
\]
Suppose that the $\bar{p}$ in Algorithm \ref{thm:SHQP-alg} is sufficiently
large so that for any $\bar{p}$ unit vectors in $\mathbb{R}^{n}$,
there are two vectors such that the angle between them is at most
$\bar{\alpha}$. 

(a) For any $\epsilon>0$, there is an $I>0$ such that if $I<i<k$
and $\angle y_{j}0y_{k}\leq\bar{\alpha}$ for some $j\in\{i,i-1,\dots,k-1\}\cap S_{k}$,
then 
\[
\frac{\|x_{k}-\bar{x}\|}{\|x_{i}-\bar{x}\|}\leq\frac{\|x_{k}-\bar{x}\|}{\|x_{j}-\bar{x}\|}\leq\epsilon.
\]

(b) If in addition $f$ is strongly semismooth at $\bar{x}$, then
there are $M>0$ and $I>0$ such that if $I<i<k$ and $\angle y_{j}0y_{k}\leq\bar{\alpha}$
for some $j\in\{i,i-1,\dots,k-1\}\cap S_{k}$, then 
\begin{equation}
\frac{\|x_{k}-\bar{x}\|}{\|x_{i}-\bar{x}\|^{2}}\leq\frac{\|x_{k}-\bar{x}\|}{\|x_{j}-\bar{x}\|^{2}}<M.\label{eq:p-term-quad}
\end{equation}
\end{thm}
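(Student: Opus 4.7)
The plan is to adapt the proof of Theorem~\ref{thm:conv-SIP} almost step by step, with Lemma~\ref{lem:2-int-results} playing the role of the local metric inequality and (strong) semismoothness of $f$ playing the role of Theorem~\ref{thm:radiality} (resp.\ the SOSH property). Letting $C := f^{-1}((-\infty, 0])$, the direction-convergence argument of \cite{cut_Pang12} carries over verbatim---it uses only property~(3) of Step~2 and outer semicontinuity of $N_C$---to show $d\bigl((x_i - \bar x)/\|x_i - \bar x\|,\, N_C(\bar x)\bigr) \to 0$. Since $0 \notin \partial f(\bar x)$ and $\bar{\alpha} < \sin^{-1}\bigl(d(0, \partial f(\bar x))/\sup_{y \in \partial f(\bar x)} \|y\|\bigr)$, I fix $\gamma_1 > 0$ and $\gamma_2 \in \bigl(\sin\bar{\alpha},\, d(0, \partial f(\bar x))/\sup_{y \in \partial f(\bar x)}\|y\|\bigr)$, then apply Lemma~\ref{lem:2-int-results} to conclude that for all $j$ sufficiently large,
\[
f(x_j) \;\geq\; \gamma_1 \|x_j - \bar x\|, \qquad \bigl\langle y_j/\|y_j\|,\; (x_j - \bar x)/\|x_j - \bar x\|\bigr\rangle \;\geq\; \gamma_2.
\]

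Next, I estimate the distance from $\bar x$ to the cutting hyperplane $A_j := \partial H_j^{\leq}$. The subgradient inequality $f(x_j) + \langle y_j, \bar x - x_j\rangle \leq f(\bar x) = 0$ gives
\[
d(\bar x, A_j) \;=\; \bigl[\langle y_j, x_j - \bar x\rangle - f(x_j)\bigr]\big/\|y_j\| \;\geq\; 0.
\]
Convexity implies $f$ is semismooth at $\bar x$, so the bracket is $o(\|x_j - \bar x\|)$; outer semicontinuity of $\partial f$ combined with $0 \notin \partial f(\bar x)$ bounds $\|y_j\|$ below by a positive constant for $j$ large. Hence $d(\bar x, A_j) = o(\|x_j - \bar x\|)$. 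For part~(b), the additional hypothesis of strong semismoothness upgrades the bracket to $O(\|x_j - \bar x\|^2)$, giving $d(\bar x, A_j) = O(\|x_j - \bar x\|^2)$.

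The geometric finish now proceeds exactly as in Step~2 of the proof of Theorem~\ref{thm:conv-SIP}, working in the two-dimensional affine space $\bar x + \spanm\{y_j, y_k\}$. The inequality $\langle y_k/\|y_k\|, (x_k - \bar x)/\|x_k - \bar x\|\rangle \geq \gamma_2$ translates to $\sin\beta_k \geq \gamma_2$, where $\beta_k$ denotes the angle at $\bar x$ between $(x_k - \bar x)$ and the hyperplane through $\bar x$ normal to $y_k$; the hypothesis gives $\alpha_k := \angle y_j 0 y_k \leq \bar{\alpha}$; and the choice $\gamma_2 > \sin\bar{\alpha}$ forces $\beta_k - \alpha_k$ to be bounded away from $0$. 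Because both $x_k$ and $\bar x$ lie in $H_j^{\leq}$, the ray from $\bar x$ through $x_k$ exits $H_j^{\leq}$ only beyond $x_k$, so the same plane-triangle estimate as in the SIP proof yields $\|x_k - \bar x\| \leq d(\bar x, A_j)/\sin(\beta_k - \alpha_k)$. Feeding in the estimate on $d(\bar x, A_j)$ and invoking the Fej\'er chain $\|x_k - \bar x\| \leq \|x_j - \bar x\| \leq \|x_i - \bar x\|$ (property~(2) of Step~2 of Algorithm~\ref{thm:SHQP-alg}) immediately produces conclusion~(a) with any prescribed $\epsilon$ once $I$ is large enough, and conclusion~(b) with a finite constant $M$ under strong semismoothness.

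The main obstacle I anticipate is just verifying that the direction-convergence argument of \cite{cut_Pang12}, originally written for the SIP, transfers unchanged; since property~(3) of Step~2 of Algorithm~\ref{thm:SHQP-alg} is formally identical to its SIP counterpart and the only input required is outer semicontinuity of the normal cone of $C$, I expect this to go through with purely cosmetic changes. The distance estimate on $A_j$ in the second paragraph and the plane-geometry argument in the third are then direct transcriptions from the SIP setting.
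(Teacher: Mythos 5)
Your proposal is correct and follows the paper's overall strategy (direction convergence of $(x_i-\bar x)/\|x_i-\bar x\|$, Lemma \ref{lem:2-int-results}, a (strong) semismoothness estimate on the depth of the cut $A_j$, plane geometry, Fej\'{e}r monotonicity), but it streamlines two internal steps in a way worth recording. First, you bound $d(\bar x,A_j)$ directly by $[\langle y_j,x_j-\bar x\rangle-f(x_j)]/\|y_j\|$ and invoke (strong) semismoothness together with a lower bound on $\|y_j\|$; the paper instead parametrizes the point where the segment $[\bar x,x_j]$ meets $A_j$ (the scalar $\lambda_j$ of its Part 3) and uses $f(x_j)\geq\gamma_1\|x_j-\bar x\|$ in the denominator---your version is equivalent, slightly more direct, and makes conclusion (1) of Lemma \ref{lem:2-int-results} unnecessary. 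Second, you assert $\|x_k-\bar x\|\leq d(\bar x,A_j)/\sin(\beta_k-\alpha_k)$ outright, attributing it to the SIP plane-triangle estimate; strictly speaking that estimate only controls in-plane quantities (the paper's $\|\Pi x_k-\bar x\|$), and the paper inserts its Part 4 bound $\|x_k-\bar x\|\leq K\|x_k-x_k^{+}\|$ with $K=1.1/\gamma_2$ precisely to pass from the planar projection back to $\|x_k-\bar x\|$. Your stronger inequality is nevertheless valid and needs only one extra line: since $x_k,\bar x\in H_j^{\leq}$ we have $\langle y_j/\|y_j\|,x_k-\bar x\rangle\leq d(\bar x,A_j)$, while the angle between $x_k-\bar x$ and $y_j$ is at most $\theta_k+\alpha_k$ by the triangle inequality for angles on the sphere, so $\langle y_j/\|y_j\|,x_k-\bar x\rangle\geq\|x_k-\bar x\|\cos(\theta_k+\alpha_k)=\|x_k-\bar x\|\sin(\beta_k-\alpha_k)$; with this observation your finish replaces the paper's Part 4 and the $\Pi$-bookkeeping. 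One cosmetic correction: the direction-convergence step for the CIP cannot literally reuse outer semicontinuity of $N_C$ along the iterates, since $x_k\notin C$; the mechanism (as in the paper's Part 1) is outer semicontinuity of $\partial f$, the identity $N_{f^{-1}((-\infty,0])}(\bar x)=\pos(\partial f(\bar x))$, and the pointedness supplied by $0\notin\partial f(\bar x)$ via Lemma \ref{lem:pointed-cone-modulus}. None of these points is a genuine gap.
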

\begin{proof}
The proof has quite a few parts.

\textbf{\uline{Part 1}}\textbf{: Any cluster point of $\{\frac{x_{i}-\bar{x}}{\|x_{i}-\bar{x}\|}\}$
is in $N_{f^{-1}((-\infty,0])}(\bar{x})=\pos(\partial f(\bar{x}))$.}

This part of the proof contains ideas in the proof of \cite[Proposition 5.8]{cut_Pang12},
but is much simpler. We use the variables $j$ and $k$ to be running
variables incompatible with (a) and (b) of the main result. By the
design of step 2, part (3) of Algorithm \ref{thm:SHQP-alg}, we have
\[
x_{i+1}=x_{i}-\sum_{k=\max(1,i-\bar{p})}^{i}\lambda_{i,k}y_{k},
\]
where $\lambda_{i,k}\geq0$ for all $i$ and $k$ such that $\max(1,i-\bar{p})\leq k\leq i$,
and $\lambda_{i,k}=0$ if $k\notin S_{i}$. For $j>i$, we then have
\[
x_{i}-x_{j}=\sum_{s=i}^{j-1}\sum_{k=\max(1,s-\bar{p})}^{s}\lambda_{s,k}y_{k}.
\]
We have
\begin{equation}
x_{i}-\bar{x}=\lim_{j\to\infty}\sum_{s=i}^{j-1}\sum_{k=\max(1,s-\bar{p})}^{s}\lambda_{s,k}y_{k}.\label{eq:x-i-minus-x-bar}
\end{equation}
We now show that the convergence of \eqref{eq:x-i-minus-x-bar} is
absolute so that an infinite sum notation is justified. By the outer
semicontinuity of the subdifferential mapping $\partial f(\cdot)$,
for any $\epsilon>0$, we can find $I$ large enough so that $\partial f(x_{i})\subset\partial f(\bar{x})+\epsilon\mathbb{B}$
for all $i>I-\bar{p}$. Since $0\notin\partial f(\bar{x})$, we can
choose $\epsilon$ small enough so that $0\notin\partial f(\bar{x})+\epsilon\mathbb{B}$.
The set $\pos(\partial f(\bar{x})+\epsilon\mathbb{B})$ is a pointed
cone, so by Lemma \ref{lem:pointed-cone-modulus}, there is a constant
$m>0$ such that 
\[
m\sum_{s=i}^{j-1}\sum_{k=\max(1,s-\bar{p})}^{s}\lambda_{s,k}\|y_{k}\|\leq\left\Vert \sum_{s=i}^{j-1}\sum_{k=\max(1,s-\bar{p})}^{s}\lambda_{s,k}y_{k}\right\Vert =\|x_{i}-x_{j}\|.
\]
Taking the limits as $j\to\infty$, we have 
\[
\sum_{s=i}^{\infty}\sum_{k=\max(1,s-\bar{p})}^{s}\lambda_{s,k}\|y_{k}\|\leq\frac{1}{m}\|x_{i}-\bar{x}\|<\infty,
\]
which shows that the convergence of \eqref{eq:x-i-minus-x-bar} is
absolute.

For any $\epsilon>0$, we can always choose $i$ large enough so that
$y_{k}\in\partial f(\bar{x})+\epsilon\mathbb{B}$ for all $k\geq i-\bar{p}$.
This means that 
\[
x_{i}-\bar{x}=\sum_{s=i}^{\infty}\sum_{k=\max(1,s-\bar{p})}^{s}\lambda_{s,k}y_{k}\in\pos\big(\partial f(\bar{x})+\epsilon\mathbb{B}\big).
\]
Since $\pos(\partial f(\bar{x}))$ is a pointed cone, the set $\pos(\partial f(\bar{x})+\epsilon\mathbb{B})\cap\partial\mathbb{B}$
converges to $\pos(\partial f(\bar{x}))\cap\partial\mathbb{B}$ in
the Pompieu Hausdorff distance as $\epsilon\searrow0$. So the cluster
points of $\{\frac{x_{i}-\bar{x}}{\|x_{i}-\bar{x}\|}\}$ lie in $\pos(\partial f(\bar{x}))$.
Since\textbf{ }$\pos(\partial f(\bar{x}))$\textbf{ }equals \textbf{$N_{f^{-1}((-\infty,0])}(\bar{x})$}
by \cite[Proposition 10.3]{RW98}, we are done.

\textbf{\uline{Part 2}}\textbf{: Applying Lemma \ref{lem:2-int-results}.}

By applying Lemma \ref{lem:2-int-results} and part 1, we deduce that
if $\gamma_{1}<d(0,\partial f(\bar{x}))$ and $\gamma_{2}<\frac{d(0,\partial f(\bar{x}))}{\sup_{y\in\partial f(\bar{x})}\|y\|}$,
then there is an $I>0$ such that\begin{subequations} 
\begin{eqnarray}
 &  & \frac{f(x_{i})}{\|x_{i}-\bar{x}\|}>\gamma_{1}\label{eq:part2-1}\\
 & \mbox{ and } & \left\langle \frac{x_{i}-\bar{x}}{\|x_{i}-\bar{x}\|},\frac{y_{i}}{\|y_{i}\|}\right\rangle >\gamma_{2}\mbox{ for all }i>I.\label{eq:part2-2}
\end{eqnarray}
\end{subequations}

\textbf{\uline{Part 3}}\textbf{: Defining and evaluating }$\limsup_{j\to\infty}\lambda_{j}$
\textbf{and $\limsup_{j\to\infty}\frac{\lambda_{j}}{\|\bar{x}-x_{j}\|}$.}

 Define $f_{j}:\mathbb{R}^{n}\to\mathbb{R}$ to be $f_{j}(x):=f(x_{j})+\left\langle y_{j},x-x_{j}\right\rangle $.
In view of \eqref{eq:part2-1}, we have $f_{j}(x_{j})=f(x_{j})\geq\gamma_{1}\|\bar{x}-x_{j}\|$
for $j$ large enough. Let $\lambda_{j}\in\mathbb{R}$ be such that
$f_{j}(\lambda_{j}(x_{j}-\bar{x})+\bar{x})=0$. By the convexity of
$f(\cdot)$, it is clear that $\lambda_{j}\geq0$. Since $f(x_{j})>0$,
we have $\lambda_{j}<1$. By the semismoothness of $f$ at $\bar{x}$
and applying \eqref{eq:part2-1}, we have 
\begin{eqnarray}
(1-\lambda_{j})f_{j}(\bar{x})+\lambda_{j}f_{j}(x_{j}) & = & f_{j}\big(\lambda_{j}(x_{j}-\bar{x})+\bar{x}\big)=0\nonumber \\
\Rightarrow\lambda_{j} & = & \frac{-f_{j}(\bar{x})}{f_{j}(x_{j})-f_{j}(\bar{x})}\label{eq:lambda-to-zero}\\
 & = & \frac{-[f(x_{j})+\left\langle y_{j},\bar{x}-x_{j}\right\rangle ]}{f(x_{j})-[f(x_{j})+\left\langle y_{j},\bar{x}-x_{j}\right\rangle ]}\nonumber \\
 & = & \frac{f(\bar{x})-f(x_{j})-\left\langle y_{j},\bar{x}-x_{j}\right\rangle }{f(x_{j})+f(\bar{x})-f(x_{j})-\left\langle y_{j},\bar{x}-x_{j}\right\rangle }\nonumber \\
 & \leq & \frac{o(\|\bar{x}-x_{j}\|)}{\gamma_{1}\|\bar{x}-x_{j}\|+o(\|\bar{x}-x_{j}\|)}\nonumber \\
 & = & o(1).\nonumber 
\end{eqnarray}

In other words, $\limsup_{j\to\infty}\lambda_{j}=0$. In the case
where $f$ is strongly semismooth at $\bar{x}$, we can repeat the
calculations to get 
\[
\lambda_{j}\leq\frac{O(\|\bar{x}-x_{j}\|^{2})}{\gamma_{1}\|\bar{x}-x_{j}\|+O(\|\bar{x}-x_{j}\|^{2})}=O(\|\bar{x}-x_{j}\|),
\]
or $\limsup_{j\to\infty}\frac{\lambda_{j}}{\|\bar{x}-x_{j}\|}<\infty$.

\textbf{\uline{Part 4}}\textbf{: Bounding $\frac{\|x_{k}-\bar{x}\|}{\|x_{k}-x_{k}^{+}\|}$.}

We shall let $x_{k}^{+}$ be the point $x_{k}-\frac{f(x_{k})}{\|y_{k}\|^{2}}y_{k}$.
The point $x_{k}^{+}$ is also the projection of $x_{k}$ onto $H_{k}^{\leq}$.
The distance $\|x_{k}-x_{k}^{+}\|$ is easily calculated to be $\frac{|f(x_{k})|}{\|y_{k}\|}$.
The distance from $x_{k}$ to the hyperplane with normal $y_{k}$
passing through $\bar{x}$ can be calculated to be $\frac{1}{1-\lambda_{k}}\frac{|f(x_{k})|}{\|y_{k}\|}$. 

We now show that for any $\gamma_{2}<\frac{d(0,\partial f(\bar{x}))}{\sup_{y\in\partial f(\bar{x})}\|y\|}$,
we can find $I$ such that if $k>I$, then the angle 
\begin{equation}
\theta_{k}:=\angle x_{k}^{+}x_{k}\bar{x}\label{eq:def-theta}
\end{equation}
 is such that $\theta_{k}\leq\cos^{-1}\gamma_{2}$ by \eqref{eq:part2-2}.
Since $x_{k}-x_{k}^{+}$ is a positive multiple of $y_{k}$, by \eqref{eq:part2-1},
\[
\cos\theta_{k}=\left\langle \frac{x_{k}-\bar{x}}{\|x_{k}-\bar{x}\|},\frac{y_{i}}{\|y_{i}\|}\right\rangle >\gamma_{2},
\]
which gives us what we need. 

Now
\begin{eqnarray*}
(1-\lambda_{k})\gamma_{2}\|x_{k}-\bar{x}\| & \leq & (1-\lambda_{k})\|x_{k}-\bar{x}\|\cos\theta_{k}\\
 & = & [1-\lambda_{k}]\frac{1}{1-\lambda_{k}}\frac{|f(x_{k})|}{\|y_{k}\|}\\
 & = & \|x_{k}-x_{k}^{+}\|.
\end{eqnarray*}
Therefore, if $k$ is large enough, we have 
\begin{equation}
\|x_{k}-\bar{x}\|\leq K\|x_{k}-x_{k}^{+}\|,\label{eq:CIP-lin-reg}
\end{equation}
where $K=1.1/\gamma_{2}$. 

\textbf{\uline{Part 5}}\textbf{: Evaluating $\frac{\|x_{k}-\bar{x}\|}{\|x_{j}-\bar{x}\|}$
and $\frac{\|x_{k}-\bar{x}\|}{\|x_{j}-\bar{x}\|^{2}}$, and wrapping
up.}

We consider the two-dimensional space containing $\bar{x}$, $\bar{x}+y_{j}$
and $\bar{x}+y_{k}$. Let $x_{k}^{++}$ be the point on the line containing
$\Pi x_{k}$ and $\Pi x_{k}^{+}$ such that $\angle(\Pi x_{k})\bar{x}(\Pi x_{k}^{++})=\pi/2$.
We shall call the projections of $x_{k}$, $x_{k}^{+}$ and $x_{k}^{++}$
onto this 2d space to be $\Pi x_{k}$, $\Pi x_{k}^{+}$ and $\Pi x_{k}^{++}$.
We note that the distance from $\bar{x}$ to $A_{j}:=\partial H_{j}^{\leq}$,
the boundary of $H_{j}^{\leq}$, is bounded above by $\lambda_{j}\|x_{j}-\bar{x}\|$
because $\lambda_{j}(x_{j}-\bar{x})+\bar{x}\in A_{j}$ and $\|[\lambda_{j}(x_{j}-\bar{x})+\bar{x}]-\bar{x}\|=\lambda_{j}\|x_{j}-\bar{x}\|$.
We can refer back to Figure \ref{fig:geom1}, but replace $a_{k}^{(l_{k}^{*})}$
by $y_{k}$ and $a_{j}^{(l)}$ by $y_{j}$.

Making use of (1) in step 2 of Algorithm \ref{thm:SHQP-alg}, we can
use elementary geometry (in the same manner as in the proof of \eqref{eq:ineq-C})
to prove the inequality 
\begin{equation}
\|\Pi x_{k}-\bar{x}\|\leq\frac{d(\bar{x},A_{j})}{\sin(\beta_{k}-\alpha_{k})}\leq\frac{\lambda_{j}\|x_{j}-\bar{x}\|}{\sin(\beta_{k}-\alpha_{k})},\label{eq:elem-geom}
\end{equation}
where $\beta_{k}:=\angle(\Pi x_{k})\bar{x}(\Pi x_{k}^{++})$ and $\alpha_{k}=\angle y_{j}0y_{k}$.
The angle $\beta_{k}$ is bounded from below by 
\[
\beta_{k}=\angle(\Pi x_{k})\bar{x}(\Pi x_{k}^{++})\geq\angle x_{k}\bar{x}x_{k}^{++}=\frac{\pi}{2}-\angle x_{k}^{+}x_{k}\bar{x}=\frac{\pi}{2}-\theta_{k},
\]
where $\theta_{k}$ was defined in \eqref{eq:def-theta}, and is bounded
from above by $\cos^{-1}\gamma_{2}$ for large $k$. For any choice
of $\bar{\alpha}$, we can make $\gamma_{2}$ close enough to $\frac{d(0,\partial f(\bar{x}))}{\sup_{y\in\partial f(\bar{x})}\|y\|}$
so that for all $i$ large enough, we have
\[
\beta_{k}\geq\bar{\beta}>\bar{\alpha},
\]
where $\bar{\beta}:=\sin^{-1}\gamma_{2}$. Combining with the assumption
that $\alpha_{k}\leq\bar{\alpha}$, \eqref{eq:elem-geom} gives 
\begin{equation}
\|\Pi x_{k}-\bar{x}\|\leq\frac{\lambda_{j}\|x_{j}-\bar{x}\|}{\sin(\bar{\beta}-\bar{\alpha})}\mbox{ for all }i\mbox{ large enough.}\label{eq:elem-geom2}
\end{equation}
We have, by \eqref{eq:CIP-lin-reg} and \eqref{eq:elem-geom2},
\begin{eqnarray*}
\|x_{k}-\bar{x}\| & \leq & K\|x_{k}-x_{k}^{+}\|\\
 & = & K\|\Pi x_{k}-\Pi x_{k}^{+}\|\\
 & \leq & K\|\Pi x_{k}-\bar{x}\|\\
 & \leq & K\frac{\lambda_{j}\|x_{j}-\bar{x}\|}{\sin(\bar{\beta}-\bar{\alpha})}\\
\Rightarrow\frac{\|x_{k}-\bar{x}\|}{\|x_{j}-\bar{x}\|} & \leq & \frac{K\lambda_{j}}{\sin(\bar{\beta}-\bar{\alpha})}.
\end{eqnarray*}
Making use of the fact that $\lambda_{i}\searrow0$ in \eqref{eq:lambda-to-zero},
for any $\epsilon>0$, we can choose $I$ large enough so that $\frac{K\lambda_{i}}{\sin(\bar{\beta}-\bar{\alpha})}<\epsilon$
for all $i>I$. Thus the conclusion that we seek holds. In the case
where $f(\cdot)$ is strongly semismooth, we can make use of the fact
that $\limsup_{i\to\infty}\frac{\lambda_{i}}{\|x_{i}-\bar{x}\|}$
is finite in Part 3, say with value $M_{1}$, to get, for all $j$,
$k$ large enough, that 
\[
\frac{\|x_{k}-\bar{x}\|}{\|x_{j}-\bar{x}\|^{2}}\leq\frac{K(M_{1}+1)}{\sin(\bar{\beta}-\bar{\alpha})}.
\]
In view of (2) of Step 2 in Algorithm \ref{thm:SHQP-alg}, we have
$\|x_{j}-\bar{x}\|\leq\|x_{i}-\bar{x}\|$. Thus our claim is proved.
\end{proof}
\begin{rem}
(Applicability of Theorem \ref{thm:superlin-CIP} to SIP) The SIP
\eqref{eq:SIP} is a special case of the CIP \eqref{eq:CIP}, since
the problem of finding a point in the intersection of $r$ closed
convex sets $K_{1},\dots,K_{r}$ can be regarded as the problem of
finding a point $x$ such that $f(x):=\max_{l=1}^{r}d(x,K_{l})\leq0$.
So at first glance, Theorem \ref{thm:superlin-CIP} seems to be applicable
for the SIP, which would be stronger than the main result in \cite{cut_Pang12}.
However, the condition $0\notin\partial f(\bar{x})$ in Theorem \ref{thm:superlin-CIP}
is violated for the SIP.
\end{rem}

When the parameter $\bar{p}$ is set to $0$ in Algorithm \ref{thm:SHQP-alg},
we get linear convergence as shown below.
\begin{thm}
\label{thm:linear-CIP}(Linear convergence for the CIP) Suppose that
Algorithm \ref{thm:SHQP-alg} for a convex function $f:\mathbb{R}^{n}\to\mathbb{R}$
with $S_{i}\equiv\{i\}$ for all $i$ produces a sequence $\{x_{i}\}$
that converges to a point $\bar{x}\in f^{-1}(0)$ such that $0\notin\partial f(\bar{x})$.
Then the convergence is at least linear.\end{thm}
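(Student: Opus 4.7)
The plan is to exploit three facts: (i) with $S_i\equiv\{i\}$ the update $x_{i+1}=x_i-\tfrac{f(x_i)}{\|y_i\|^2}y_i$ is the orthogonal projection of $x_i$ onto $H_i^{\leq}$, so the Fej\'er identity gives $\|x_{i+1}-\bar{x}\|^2+\|x_i-x_{i+1}\|^2\le\|x_i-\bar{x}\|^2$ and $\|x_i-x_{i+1}\|=f(x_i)/\|y_i\|$; (ii) by outer semicontinuity of $\partial f$ at $\bar{x}$ together with $0\notin\partial f(\bar{x})$, the norms $\|y_i\|$ are eventually bounded above by $\gamma+\epsilon$ where $\gamma:=\sup_{y\in\partial f(\bar{x})}\|y\|<\infty$; (iii) Lemma \ref{lem:2-int-results}(1) supplies a lower bound $f(x_i)\ge\gamma_1\|x_i-\bar{x}\|$ for any $\gamma_1<d(0,\partial f(\bar{x}))$, provided the unit vector $(x_i-\bar{x})/\|x_i-\bar{x}\|$ is eventually close to $N_{f^{-1}((-\infty,0])}(\bar{x})$. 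Combining these,
\[
\|x_{i+1}-\bar{x}\|^2\le\left(1-\frac{\gamma_1^2}{(\gamma+\epsilon)^2}\right)\|x_i-\bar{x}\|^2,
\]
which gives the claimed linear rate once I show $\gamma_1^2/(\gamma+\epsilon)^2<1$; this holds because $d(0,\partial f(\bar{x}))\le\gamma$, so any valid $\gamma_1$ is at most $\gamma$ and choosing $\epsilon>0$ keeps the ratio strictly below one.

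The only genuinely nontrivial step is verifying the directional hypothesis of Lemma \ref{lem:2-int-results}, namely that $(x_i-\bar{x})/\|x_i-\bar{x}\|$ accumulates in $N_{f^{-1}((-\infty,0])}(\bar{x})=\pos(\partial f(\bar{x}))$. For this I will specialize Part 1 of the proof of Theorem \ref{thm:superlin-CIP} to $\bar{p}=0$: condition (3) of step 2 of Algorithm \ref{thm:SHQP-alg} forces $x_i-x_{i+1}=\lambda_i y_i$ with $\lambda_i\ge 0$, so telescoping yields $x_i-\bar{x}=\sum_{s\ge i}\lambda_s y_s$. For $i$ large enough, outer semicontinuity gives $y_s\in\partial f(\bar{x})+\epsilon\mathbb{B}$ for all $s\ge i$, and since $0\notin\partial f(\bar{x})$ one can shrink $\epsilon$ so that $0\notin\partial f(\bar{x})+\epsilon\mathbb{B}$; then the convex set $\partial f(\bar{x})+\epsilon\mathbb{B}$ is separated from the origin, making $\pos(\partial f(\bar{x})+\epsilon\mathbb{B})$ a pointed cone. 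Lemma \ref{lem:pointed-cone-modulus} then proves the sum converges absolutely and places $x_i-\bar{x}\in\pos(\partial f(\bar{x})+\epsilon\mathbb{B})$. Letting $\epsilon\searrow0$ and using Hausdorff convergence of $\pos(\partial f(\bar{x})+\epsilon\mathbb{B})\cap\partial\mathbb{B}$ to $\pos(\partial f(\bar{x}))\cap\partial\mathbb{B}$ yields the desired directional attraction.

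Once that attraction is in hand, the proof is an immediate assembly: fix $\gamma_1\in(0,d(0,\partial f(\bar{x})))$, apply Lemma \ref{lem:2-int-results}(1), take $I$ large enough that $\|y_i\|\le\gamma+\epsilon$ and $f(x_i)\ge\gamma_1\|x_i-\bar{x}\|$ for all $i>I$, and combine with the Fej\'er identity to produce the constant rate $\sqrt{1-\gamma_1^2/(\gamma+\epsilon)^2}<1$. The main obstacle, as noted, is not any new estimate but simply observing that the directional analysis of Part 1 in the proof of Theorem \ref{thm:superlin-CIP} works unchanged in the minimal case $\bar{p}=0$, $S_i\equiv\{i\}$.
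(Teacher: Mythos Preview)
Your proposal is correct and follows essentially the same route as the paper's own proof: the paper invokes Parts 1 and 2 of the proof of Theorem \ref{thm:superlin-CIP} (the directional attraction to $\pos(\partial f(\bar{x}))$ and the application of Lemma \ref{lem:2-int-results}(1)), then combines the resulting lower bound $f(x_i)\ge\gamma_1\|x_i-\bar{x}\|$ with the Fej\'er inequality $\|x_{i+1}-\bar{x}\|^2\le\|x_i-\bar{x}\|^2-\|x_i-x_{i+1}\|^2$ and an outer-semicontinuity bound on $\|y_i\|$ to obtain the linear rate. The only cosmetic difference is that the paper bounds $\max_{y\in\partial f(x_i)}\|y\|$ by $2\max_{y\in\partial f(\bar{x})}\|y\|$ rather than your $\gamma+\epsilon$, which is an equivalent choice.
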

\begin{proof}
Note that this result is already a consequence of \cite{Robinson_76_CIP},
but we include its proof here since it is quite easy. 

The first 2 parts of the proof of Theorem \ref{thm:superlin-CIP}
still apply. Next, we have $x_{i+1}=x_{i}-\frac{f(x_{i})}{\|y_{i}\|^{2}}y_{i}$.
Using \eqref{eq:part2-1}, we get 
\begin{equation}
\left\Vert \frac{f(x_{i})}{\|y_{i}\|^{2}}y_{i}\right\Vert =\frac{|f(x_{i})|}{\|y_{i}\|}\geq\frac{\gamma_{1}}{\max_{y\in\partial f(x_{i})}\|y\|}\|x_{i}-\bar{x}\|.\label{eq:lower-bdd-on-dist}
\end{equation}
By the outer semicontinuity of $\partial f(\cdot)$, there is some
$I$ such that 
\begin{equation}
\frac{\gamma_{1}}{\max_{y\in\partial f(x_{i})}\|y\|}>\frac{\gamma_{1}}{2\max_{y\in\partial f(\bar{x})}\|y\|},\label{eq:lower-bdd-max-den}
\end{equation}
and the constant on the right is positive. Also, by the property of
projections, the angle $\angle x_{i}x_{i+1}\bar{x}$ is obtuse. Together
with \eqref{eq:lower-bdd-on-dist} and \eqref{eq:lower-bdd-max-den},
we have 
\begin{eqnarray*}
\|x_{i+1}-\bar{x}\|^{2} & \leq & \|x_{i}-\bar{x}\|^{2}-\|x_{i}-x_{i+1}\|^{2}\\
 & = & \|x_{i}-\bar{x}\|^{2}-\bigg[\frac{|f(x_{i})|}{\|y_{i}\|}\bigg]^{2}\\
 & \leq & \|x_{i}-\bar{x}\|^{2}-\bigg[\frac{\gamma_{1}}{2\max_{y\in\partial f(\bar{x})}\|y\|}\bigg]^{2}\|x_{i}-\bar{x}\|^{2}\\
\frac{\|x_{i+1}-\bar{x}\|}{\|x_{i}-\bar{x}\|} & \leq & \sqrt{1-\bigg[\frac{\gamma_{1}}{2\max_{y\in\partial f(\bar{x})}\|y\|}\bigg]^{2}}<1.
\end{eqnarray*}
This shows that $\limsup_{i\to\infty}\frac{\|x_{i+1}-\bar{x}\|}{\|x_{i}-\bar{x}\|}<1$,
which is the linear convergence we seek.
\end{proof}
Theorems \ref{thm:superlin-CIP} and \ref{thm:linear-CIP} suggest
that as the parameter $\bar{p}$ increases, the constant of linear
convergence gets lower, from linear convergence for the case of $\bar{p}=0$
in Theorem \ref{thm:linear-CIP} to the case of superlinear convergence
in Theorem \ref{thm:superlin-CIP}. 

We show that the condition $0\notin\partial f(\bar{x})$ cannot be
dropped in Theorems \ref{thm:superlin-CIP} and \ref{thm:linear-CIP}.
\begin{example}
(Condition $0\notin\partial f(\bar{x})$ essential in Theorems \ref{thm:superlin-CIP}
and \ref{thm:linear-CIP}) Let $f:\mathbb{R}\to\mathbb{R}$ be a convex
function such that 
\[
f(x)=\begin{cases}
e^{-1/x} & \mbox{ if }x\in(0,0.5]\\
0 & \mbox{ if }x=0\\
f(-x) & \mbox{ if }x\in[-0.5,0).
\end{cases}
\]
The function $f$ is convex on $[0,0.5]$ because $f^{\prime\prime}(x)=e^{-1/x}\frac{1-2x}{x^{4}}$
if $x\in(0,0.5)$, and is positive in that region. It is clear that
$f^{-1}((-\infty,0])=\{0\}$. Consider Algorithm \ref{thm:SHQP-alg}
for any parameter of $\bar{p}$. If any iterate $x_{i}$ is in $(0,0.5)$,
then the next iterate $x_{i+1}$ is calculated to be 
\[
x_{i+1}=x_{i}-\frac{f(x_{i})}{f^{\prime}(x_{i})}=x_{i}-\frac{e^{-1/x_{i}}}{e^{-1/x_{i}}(1/x_{i}^{2})}=x_{i}-x_{i}^{2}.
\]
It is clear that $\lim_{i\to\infty}x_{i}=0$, and that $\lim_{i\to\infty}\frac{|x_{i+1}-0|}{|x_{i}-0|}=1$,
so there is no linear convergence.
\end{example}
Robinson \cite{Robinson_76_CIP} included his reasons for analyzing
the KCIP \eqref{eq:KCIP} for only the case when the domain of $f$
is $\mathbb{R}^{n}$ and not a Hilbert space in general, and these
arguments carry over to the main results here as well. 
\begin{rem}
(On CIP involving many smooth convex functions) The original CIP studied
in \cite{G-P98,G-P01} was, for convex functions $f_{l}:\mathbb{R}^{n}\to\mathbb{R}$
defined for $l\in\{1,\dots,r\}$, 
\begin{equation}
\mbox{Find }x\in\mathbb{R}^{n}\mbox{ s.t. }f_{l}(x)\leq0\mbox{ for all }l\in\{1,\dots,r\}.\label{eq:CIP-smooth}
\end{equation}
One can treat the problem above as a CIP in our setting by considering
\begin{equation}
f_{\max}(\cdot):=\max_{l\in\{1,\dots,r\}}f_{l}(\cdot).\label{eq:f-max}
\end{equation}
A natural question to ask is whether the analysis in this section
can be generalized if we had studied \eqref{eq:CIP-smooth} instead.
Unfortunately, Lemma \ref{lem:2-int-results} cannot be easily extended.
For example, consider $f_{l}:\mathbb{R}^{2}\to\mathbb{R}$ defined
by $f_{l}(x)=x_{l}$ (i.e., taking the $l$th coordinate). There is
no constant $\gamma_{1}>0$ such that for $l\in\{1,2\}$, if $f_{l}(x)>0$,
then $f_{l}(x)\geq\gamma_{1}\|x\|$. Such a constant has to exist
in order for for Part 3 of the proof of Theorem \ref{thm:superlin-CIP}
to go through.

If $f_{l}(\cdot)$ were smooth convex functions for $l\in\{1,\dots,r\}$,
we can analyze $f_{\max}$ in \eqref{eq:f-max} using the results
in this section to obtain $r$-term superlinear or $r$-term quadratic
convergence (defined in \eqref{eq:p-term} and \eqref{eq:p-term-1})
to a point $\bar{x}\in f_{\max}^{-1}((-\infty,0])$ if the subdifferential
of $f_{\max}$ at an iterate $x_{i}$ is taken to be $\nabla f_{l}(x_{i})$
for some $l\in\{1,\dots,r\}$. 
\end{rem}

\section{\label{sec:Infeasible}Infeasibility}

Closely related to the SIP is the \emph{Best Approximation Problem
}(BAP): For a Hilbert space $X$, a point $x_{0}\in X$ and $r$ closed
convex sets $K_{i}$ for $i=1,\dots,r$, find the closest point to
$x_{0}$ in $K:=\cap K_{i}$. That is, 
\begin{eqnarray}
\mbox{(BAP):}\quad & \underset{x\in X}{\min} & \|x-x_{0}\|\label{eq:Proj-pblm}\\
 & \mbox{s.t. } & x\in K:=\bigcap_{i=1}^{r}K_{i}.\nonumber 
\end{eqnarray}
The case when the BAP is infeasible was discussed in \cite[Section 6]{cut_Pang12}.
In this section, we discuss the case where the SIP and CIP are infeasible,
and show that for the SIP and BAP, one can use a finite number of
operations to find a certificate of infeasibility. We also make another
observation for the BAP.

We present the algorithm for the BAP needed for future discussions.
\begin{algorithm}
\label{alg:BAP}(BAP algorithm) For a point $x_{0}$ and closed convex
sets $K_{l}$, $l=1,2,\dots,r$, of a Hilbert space $X$, find the
closest point to $x_{0}$ in $K:=\cap_{l=1}^{r}K_{l}$. 

\textbf{\uline{Step 0:}}\textbf{ }Let $i=0$.

\textbf{\uline{Step 1:}} For $l\in\{1,\dots,r\}$, define $x_{i}^{(l)}\in X$,
$a_{i}^{(l)}\in X$ and $b_{i}^{(l)}\in\mathbb{R}$ by 
\begin{eqnarray*}
x_{i}^{(l)} & = & P_{K_{l}}(x_{i}),\\
a_{i}^{(l)} & = & x_{i}-x_{i}^{(l)},\\
\mbox{and }b_{i}^{(l)} & = & \left\langle a_{i}^{(l)},x_{i}^{(l)}\right\rangle .
\end{eqnarray*}
For each $i$, we let $l_{i}^{*}\in\{1,\dots,r\}$ be such that 
\[
l_{i}^{*}:=\arg\max_{1\leq l\leq r}\|x_{i}-P_{K_{l}}(x_{i})\|.
\]
\textbf{\uline{Step 2:}} Choose $S_{i}\subset\{\max(i-\bar{p},0),\dots,i\}\times\{1,\dots,r\}$,
and define $F_{i}\subset X$ by
\begin{eqnarray}
F_{i} & := & \bigcap_{(j,l)\in S_{i}}H_{(j,l)}^{\leq},\label{eq:dykstra-all-planes}\\
\mbox{where }H_{j,i}^{\leq} & := & \left\{ x:\left\langle a_{j}^{(l)},x\right\rangle \leq b_{j}^{(l)}\right\} .
\end{eqnarray}
Let $x_{i+1}=P_{F_{i}}(x_{0})$.

\textbf{\uline{Step 3:}} Set $i\leftarrow i+1$, and go back to
step 1. 
\end{algorithm}
We now present our first result that will supersede \cite[Theorem 6.1]{cut_Pang12}. 
\begin{thm}
\label{thm:No-cluster-pt}(No strong cluster points under infeasibility)
Let $X$ be a Hilbert space, $x_{0}\in\mathbb{R}^{n}$, and let $K_{l}\subset X$
be closed convex sets for $l=1,\dots,r$. Suppose $K:=\cap_{l=1}^{r}K_{l}$
is the empty set. Consider the following scenarios for solving the
SIP, CIP and BAP respectively.
\begin{enumerate}
\item Suppose Algorithm \ref{alg:Mass-proj-alg} is run for the SIP with
$\bar{p}=\infty$ at step 0 and the conditions \begin{subequations}
\label{enu:nec-condn-no-cluster-pts} 
\begin{eqnarray}
S_{i} & \subset & S_{i+1}\mbox{ for all }i\geq0,\label{eq:S-subset-S}\\
\mbox{ and }(i,l_{i}^{*}) & \in & S_{i\phantom{+1}}\mbox{ for all }i\geq0\label{eq:take-best-halfsp}
\end{eqnarray}
\end{subequations}hold. If $\tilde{F}_{i}=\emptyset$ for some $i$,
then infeasibility is detected. Otherwise, the sequence $\{x_{i}\}$
produced cannot have strong cluster points.
\item Suppose Algorithm \ref{thm:SHQP-alg} is run for the CIP with $\bar{p}=\infty$
in step 0, and $S_{i}=\{0,1,\dots,i\}$ for all $i\geq0$. If $F_{i}=\emptyset$
for some $i$, then infeasibility is detected. Otherwise, the sequence
$\{x_{i}\}$ produced cannot have strong cluster points.
\item Suppose Algorithm \ref{alg:BAP} is run for the BAP, and \eqref{enu:nec-condn-no-cluster-pts}
holds. If $F_{i}=\emptyset$ for some $i$, then infeasibility is
detected. Otherwise, the sequence $\{x_{i}\}$ produced cannot have
strong cluster points.
\end{enumerate}
\end{thm}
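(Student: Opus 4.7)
The plan is to handle all three scenarios with a single contradiction argument. Assume that $\{x_i\}$ has a strong cluster point $\bar{x}$, and pass to a subsequence $\{x_{i_k}\}$ with $x_{i_k}\to\bar{x}$ in norm. Since the problem is infeasible, $\bar{x}$ must violate at least one constraint. In cases (1) and (3) this gives $\delta:=\max_{1\leq l\leq r}d(\bar{x},K_l)>0$; in case (2) it gives $f(\bar{x})>0$.

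Next I would show that the ``cut depth'' of the halfspace produced at iteration $i_k$ is bounded away from zero. In cases (1) and (3), nonexpansiveness of metric projections onto closed convex sets in Hilbert space implies $P_{K_l}(x_{i_k})\to P_{K_l}(\bar{x})$ for each $l$, so $\|a_{i_k}^{(l_{i_k}^*)}\|\to\delta>0$. In case (2), continuity of $f$ gives $f(x_{i_k})\to f(\bar{x})>0$, while outer semicontinuity of $\partial f$ keeps $\|y_{i_k}\|$ bounded above, so $f(x_{i_k})/\|y_{i_k}\|$ is bounded below by a positive constant.

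The crucial bookkeeping step is to observe that by \eqref{eq:S-subset-S} and \eqref{eq:take-best-halfsp} in cases (1) and (3) (and by $S_i=\{0,\dots,i\}$ in case (2)), the halfspace generated at iteration $i_k$ persists in every subsequent set $\tilde F_j$, $F_j$. Hence for any $k'>k$ the iterate $x_{i_{k'}}$ lies in that halfspace. Writing out this membership for the SIP/BAP case yields
\begin{equation*}
\bigl\langle a_{i_k}^{(l_{i_k}^*)},\,x_{i_k}-x_{i_{k'}}\bigr\rangle \;\geq\; \bigl\|a_{i_k}^{(l_{i_k}^*)}\bigr\|^{2},
\end{equation*}
and Cauchy--Schwarz therefore gives $\|x_{i_k}-x_{i_{k'}}\|\geq\|a_{i_k}^{(l_{i_k}^*)}\|$. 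The analogous computation in the CIP case yields $\|x_{i_k}-x_{i_{k'}}\|\geq f(x_{i_k})/\|y_{i_k}\|$. Letting $k,k'\to\infty$ with $k'>k$, the right-hand side stays bounded below by a positive number, while the left-hand side tends to $0$ by strong convergence of the subsequence — the desired contradiction.

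The main obstacle is really just checking that the ``bad'' halfspace from step $i_k$ is still present when $x_{i_{k'}}$ is chosen, so that the membership relation producing the above inequality actually holds; this is exactly what assumption \eqref{enu:nec-condn-no-cluster-pts} (and its CIP analogue) provides. A minor extra check is needed in case (3), where $x_{i+1}=P_{F_i}(x_0)$: we need $F_i\neq\emptyset$ to guarantee $x_{i+1}\in F_i$, and if ever $F_i=\emptyset$ (respectively $\tilde F_i=\emptyset$ in case (1)) then the finite collection of halfspaces defining it is itself a finite certificate of infeasibility, giving the ``infeasibility detected'' alternative in the statement.
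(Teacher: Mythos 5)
Your proposal is correct and takes essentially the same route as the paper's proof: both arguments rest on the observation that, thanks to \eqref{enu:nec-condn-no-cluster-pts} (resp.\ $S_i=\{0,\dots,i\}$ for the CIP), the deep-cut halfspace generated at an iterate near the putative cluster point $\bar{x}$ persists in every later set $\tilde{F}_j$ (resp.\ $F_j$), and its depth is bounded below by roughly $\max_l d(\bar{x},K_l)$ (resp.\ $f(\bar{x})/\|y\|$), so all subsequent iterates are pushed a fixed distance away. Your Cauchy-subsequence formulation and the paper's ``the halfspace excludes $\bar{x}$, hence $\bar{x}\notin\tilde{F}_j$ for all $j\geq i$'' are the same estimate in slightly different packaging.
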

Note that the condition \eqref{eq:S-subset-S} implies that the feasible
sets $F_{i}$ are such that $F_{i+1}\subset F_{i}$ (or $\tilde{F}_{i+1}\subset\tilde{F}_{i}$
for Algorithm \ref{alg:Mass-proj-alg}) for all $i\geq0$. Condition
\eqref{eq:take-best-halfsp} implies that a halfspace generated by
projecting $x_{i}$ onto a set furthest away from $x_{i}$ is taken.
We continue with the proof of Theorem \ref{thm:No-cluster-pt}.
\begin{proof}
We consider (1) first. Suppose on the contrary that $\{x_{i}\}$ has
a strong cluster point, say $\bar{x}$. Assume without loss of generality
that $K_{1}$ is such that $d(\bar{x},K_{1})=\max_{l\in\{1,\dots,r\}}d(\bar{x},K_{l})$.
There must be a point, say $x_{i}$, such that $\|x_{i}-\bar{x}\|<\frac{1}{3}d(\bar{x},K_{1})$.
It is elementary to see that $d(x_{i},K_{1})>2\|x_{i}-\bar{x}\|$.
So the distance from $x_{i}$ to a halfspace produced by projecting
$x_{i}$ onto $K_{1}$ is at least $2\|x_{i}-\bar{x}\|$, and this
halfspace must not contain $\bar{x}$. This implies that $\bar{x}\notin\tilde{F}_{i}$,
and that $\bar{x}\notin\tilde{F}_{j}$ for all $j\geq i$. Therefore
$\bar{x}$ cannot be a strong cluster point.

The proof of (3) is the exactly the same as that for (1).

The proof of (2) is similar with slightly different constants. Suppose
on the contrary that $\{x_{i}\}$ has a strong cluster point $\bar{x}$,
where $f(\bar{x})>0$. Note that the distance from $x_{i}$ to the
halfspace 
\[
\{x:f(x_{i})+\langle x-x_{i},s_{i}\rangle\leq0\}
\]
 is equal to $\frac{1}{\|s_{i}\|}f(x_{i})$. In view of the outer
semicontinuity of the subdifferential mapping for convex functions,
there is a neighborhood $U$ of $\bar{x}$ such that $\|s\|<\max_{y\in\partial f(\bar{x})}\|y\|+1$
for all $x\in U$ and $s\in\partial f(x)$. Also, continuity of $f(\cdot)$
implies that $f(x_{i})$ can be arbitrarily close to $f(\bar{x})$.
If $x_{i}$ is close enough to $\bar{x}$ such that 
\[
\|x_{i}-\bar{x}\|<\frac{1}{2\|s_{i}\|}f(x_{i}),
\]
then we will get a similar contradiction as before.
\end{proof}
We improve the techniques in \cite[Theorem 6.2]{cut_Pang12} to prove
the following result. 
\begin{thm}
\label{thm:Recession-dirns}(Recession directions) Let $x_{0}\in\mathbb{R}^{n}$,
and let $K_{l}\subset\mathbb{R}^{n}$ be closed convex sets such that
$K:=\cap_{l=1}^{r}K_{l}=\emptyset$. Suppose either 
\begin{itemize}
\item Algorithm \ref{alg:Mass-proj-alg} is run with $\bar{p}=\infty$ in
step 0 and the conditions \eqref{enu:nec-condn-no-cluster-pts} hold,
or 
\item Algorithm \ref{alg:BAP} is run so that the conditions \eqref{enu:nec-condn-no-cluster-pts}
hold.
\end{itemize}
Let $\{x_{i}\}$ be the sequence produced. Then any cluster point
of $\{\frac{x_{i}}{\|x_{i}\|}\}_{i}$ lies in $R(K_{l})$, the recession
cone of $K_{l}$, for all $l\in\{1,\dots,r\}$.\end{thm}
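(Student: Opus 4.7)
My strategy is to reduce the statement, for each fixed $l \in \{1,\ldots,r\}$, to the following standard fact: if $\{y_i\} \subset K_l$ satisfies $\|y_i\| \to \infty$, then every cluster point of $y_i/\|y_i\|$ lies in $R(K_l)$. Since the convergence is assumed to be non-finite (no infeasibility certificate is obtained in finitely many steps), Theorem~\ref{thm:No-cluster-pt} applies: the sequence $\{x_i\}$ has no strong cluster points. In finite dimensions this forces $\|x_i\| \to \infty$ by the Bolzano-Weierstrass theorem, so $\{x_i/\|x_i\|\}$ lives on the unit sphere and has cluster directions.

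Fix a cluster direction $d$ along a subsequence $x_{i_k}/\|x_{i_k}\| \to d$, and fix $l$. Take the candidate sequence $y_i := P_{K_l}(x_i) \in K_l$, so $\|y_i - x_i\| = \|a_i^{(l)}\|$. A short triangle-inequality calculation gives the reduction
\[
\left\| \frac{y_i}{\|y_i\|} - \frac{x_i}{\|x_i\|} \right\| \leq \frac{2\|a_i^{(l)}\|}{\|x_i\|},
\]
so if $\|a_{i_k}^{(l)}\|/\|x_{i_k}\| \to 0$, then $y_{i_k}/\|y_{i_k}\| \to d$ and $\|y_{i_k}\| \to \infty$, whence the standard recession-cone fact yields $d \in R(K_l)$. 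Since $\|a_i^{(l)}\| \leq \|a_i^{(l_i^*)}\|$ by the definition of $l_i^*$, it suffices to establish $\|a_{i_k}^{(l_{i_k}^*)}\|/\|x_{i_k}\| \to 0$ along the cluster subsequence.

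The main obstacle is this last vanishing ratio. The hypotheses $(i, l_i^*) \in S_i$ and $S_i \subset S_{i+1}$ are essential: they ensure each ``worst'' halfspace $H_{(j, l_j^*)}^\leq$ remains active for all subsequent iterates, giving
\[
\langle a_j^{(l_j^*)}, x_i - x_j \rangle \leq -\|a_j^{(l_j^*)}\|^2 \quad\text{and hence}\quad \|x_i - x_j\| \geq \|a_j^{(l_j^*)}\|
\]
for every $i > j$. In the BAP case, the Pythagorean identity for nested projections additionally yields $\|x_{i+1} - x_0\|^2 - \|x_i - x_0\|^2 \geq \|a_i^{(l_i^*)}\|^2$; in the SIP case, condition~(3) of Step~2 of Algorithm~\ref{alg:Mass-proj-alg} expresses $x_i - x_{i+1}$ as a cone combination of accumulated normals, and $x_{i+1} \in \tilde{F}_i$ ensures it covers at least the distance $\|a_i^{(l_i^*)}\|$. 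I would argue by contradiction: if $\|a_{i_k}^{(l_{i_k}^*)}\| \geq c\|x_{i_k}\|$ for some $c > 0$ along a sub-subsequence, the separation $\|x_{i'} - x_{i_k}\| \geq c\|x_{i_k}\|$ for all later $i'$ -- combined with $x_{i'}/\|x_{i'}\|$ and $x_{i_k}/\|x_{i_k}\|$ both tending to $d$ -- forces the magnitudes $\|x_{i_k}\|$ to grow geometrically on the sub-subsequence, and exploiting the projection or convex-combination structure against the collapse of asymptotic directions to $d$ should yield the desired contradiction. Finding the clean geometric contradiction in this last step is what I expect to be the trickiest part of the proof.
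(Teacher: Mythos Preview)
Your reduction is sound and genuinely different from the paper's route. Showing that $\|a_{i_k}^{(l_{i_k}^{*})}\|/\|x_{i_k}\|\to 0$ along any cluster subsequence is indeed equivalent to the conclusion, and the triangle-inequality bridge via $y_i=P_{K_l}(x_i)$ together with the standard recession-cone fact is clean. The paper, by contrast, works directly by contradiction on $v\notin R(K_{l})$: it first produces a separating halfspace $\langle w_l,\cdot\rangle\le M_l$ with $\langle w_l,v\rangle>0$ for each ``bad'' index $l$ (Claim~1), then shows $l_i^{*}$ eventually lands only among bad indices (Claim~2), and finally combines the algorithm inequality $\langle u_i,v\rangle\le 0$ with the linear growth $\alpha_i\ge\langle w,\tilde{x}_i\rangle-M$ to force $\langle u,c\rangle=-\infty$ for a fixed $c\in K_{l^{*}}$. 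Your reformulation would let you bypass Claims~1 and~2 entirely, since the contradiction hypothesis $\|a_{i_k}^{(l^{*})}\|\ge c\|x_{i_k}\|$ already supplies the linear lower bound the paper extracts from the separating hyperplane.

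The gap is your last step. ``Geometric growth of $\|x_{i_k}\|$'' is a red herring: nothing prevents the norms from growing geometrically, and neither the Pythagorean identity (BAP) nor the cone-combination structure (SIP) yields a contradiction from growth alone. What actually closes the argument is the same mechanism the paper uses. From $S_{i_k}\subset S_{i'}$ and $(i_k,l_{i_k}^{*})\in S_{i_k}$ you get $\langle u_k,x_{i'}\rangle\le\langle u_k,x_{i_k}^{(l^{*})}\rangle$ for all $i'>i_k$; dividing by $\|x_{i'}\|$ and sending $i'\to\infty$ along the cluster subsequence gives $\langle u_k,d\rangle\le 0$. Now fix any $c_0\in K_{l^{*}}$ (pass to a sub-subsequence with $l_{i_k}^{*}\equiv l^{*}$). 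Since $u_k\in N_{K_{l^{*}}}(x_{i_k}^{(l^{*})})$,
\[
\langle u_k,c_0\rangle\le\langle u_k,x_{i_k}^{(l^{*})}\rangle=\langle u_k,x_{i_k}\rangle-\|a_{i_k}^{(l^{*})}\|
=\|x_{i_k}\|\Big(\big\langle u_k,\tfrac{x_{i_k}}{\|x_{i_k}\|}\big\rangle-\tfrac{\|a_{i_k}^{(l^{*})}\|}{\|x_{i_k}\|}\Big).
\]
The bracket is at most $\langle u_k,d\rangle+\|x_{i_k}/\|x_{i_k}\|-d\|-c\le \|x_{i_k}/\|x_{i_k}\|-d\|-c$, which is eventually below $-c/2$, so $\langle u_k,c_0\rangle\to-\infty$ while $|\langle u_k,c_0\rangle|\le\|c_0\|$. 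That is the contradiction you need; once you insert it, your proof is complete and arguably tidier than the paper's.
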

\begin{proof}
The proof here is improved from that of \cite[Theorem 6.2]{cut_Pang12}.
Let $\{\frac{\tilde{x}_{i}}{\|\tilde{x}_{i}\|}\}$ be a subsequence
of $\{\frac{x_{i}}{\|x_{i}\|}\}$ such that 
\begin{equation}
\lim_{i\to\infty}\|\tilde{x}_{i}\|=\infty\mbox{ and }\lim_{i\to\infty}\frac{\tilde{x}_{i}}{\|\tilde{x}_{i}\|}=v.\label{eq:inf-and-lim}
\end{equation}
We show that such a limit has to lie in $R(K_{l})$ for all $l\in\{1,\dots,r\}$.
Seeking a contradiction, suppose that $v$ is such that $v\notin R(K_{l})$
for some $l\in\{1,\dots,r\}$. Let $L_{1}\subset\{1,\dots,r\}$ be
such that $v\in R(K_{l})$ for all $l\in L_{1}$, and $L_{2}=\{1,\dots,r\}\backslash L_{1}$. 

\textbf{Claim 1:} For each $l\in L_{2}$, there is a unit vector $w_{l}\in\mathbb{R}^{n}$
and $M_{l}\in\mathbb{R}$ such that $\langle w_{l},v\rangle>0$, and
$\langle w_{l},c\rangle\leq M_{l}$ for all $c\in K_{l}$.

Take any point $y_{l}\in K_{l}$. Since $v\notin R(K_{l})$, there
is some $\gamma\geq0$ such that $y_{l}+\gamma v\in K_{l}$, but $y_{l}+\gamma^{\prime}v\notin K_{l}$
for all $\gamma^{\prime}>\gamma$. It follows that there exists a
unit vector $w_{l}\in N_{K_{l}}(y_{l}+\gamma v)$ such that $\langle w_{l},v\rangle>0$,
and we can take $M_{l}=\langle w_{l},y_{l}+\gamma v\rangle$. This
ends the proof of Claim 1.

\textbf{Claim 2:} There is an $I$ such that if $i>I$, $\arg\max_{l\in\{1,\dots,r\}}d(\tilde{x}_{i},K_{l})\in L_{2}$. 

In view of \eqref{eq:inf-and-lim}, we can write $\tilde{x}_{i}=\rho_{i}[v+\beta_{i}]+\tilde{x}_{0}$,
where $\rho_{i}\in\mathbb{R}$ and $\beta_{i}\in\mathbb{R}^{n}$ are
such that $\rho_{i}\to\infty$ and $\beta_{i}\to0$. For each $l\in L_{1}$,
we can choose $y_{l}\in K_{l}$ and have $y_{l}+\mathbb{R}_{+}\{v\}\subset K_{l}$.
We can then choose $I$ large enough so that $P_{y_{l}+\mathbb{R}_{+}(v)}(\tilde{x}_{i})=P_{y_{l}+\mathbb{R}(v)}(\tilde{x}_{i})$
for all $i>I$ and $l\in L_{1}$.

We now estimate $d(\tilde{x}_{i},K_{l})$ for $l\in L_{1}$ and $i$
large enough. 
\begin{eqnarray}
d(\tilde{x}_{i},K_{l}) & \leq & d(\tilde{x}_{i},y_{l}+\mathbb{R}(v))\label{eq:est-L1}\\
 & = & d(\rho_{i}[v+\beta_{i}]+\tilde{x}_{0},y_{l}+\mathbb{R}(v))\nonumber \\
 & = & d(\rho_{i}\beta_{i}+\tilde{x}_{0}-y_{l},\mathbb{R}(v))\nonumber \\
 & \leq & d(\rho_{i}\beta_{i}+\tilde{x}_{0}-y_{l},0)\nonumber \\
 & \leq & \rho_{i}\|\beta_{i}\|+\|\tilde{x}_{0}-y_{l}\|.\nonumber 
\end{eqnarray}
For $l\in L_{2}$, let the halfspace $H_{l}$ be $\{x:\langle w_{l},x\rangle\leq M_{l}\}$.
Claim 1 says that we have $K_{l}\subset H_{l}$ for some $w_{l}\in\mathbb{R}^{n}$
with $\|w_{l}\|=1$ such that $\langle w_{l},v\rangle>0$. For $i$
large enough, we have 
\begin{eqnarray}
d(\tilde{x}_{i},K_{l}) & \geq & d(\tilde{x}_{i},H_{l})\label{eq:est-L2}\\
 & = & d(\rho_{i}[v+\beta_{i}]+\tilde{x}_{0},H_{l})\nonumber \\
 & = & \left\langle w_{l},\rho_{i}[v+\beta_{i}]+\tilde{x}_{0}\right\rangle -M_{l}\nonumber \\
 & = & \rho_{i}\langle w_{l},v\rangle+\rho_{i}\langle w_{l},\beta_{i}\rangle+\langle w_{l},\tilde{x}_{0}\rangle-M_{l}\nonumber \\
 & \geq & \rho_{i}\langle w_{l},v\rangle-\rho_{i}\|\beta_{i}\|+\langle w_{l},\tilde{x}_{0}\rangle-M_{l}.\nonumber 
\end{eqnarray}
From \eqref{eq:est-L1} and \eqref{eq:est-L2}, we can estimate that
$\arg\max_{l\in\{1,\dots,r\}}d(\tilde{x}_{i},K_{l})\in L_{2}$ for
all $i$ large enough as needed. This ends the proof of Claim 2.

Let $c_{i}:=P_{K_{l_{i}^{*}}}(\tilde{x}_{i})$, and let $u_{i}$ be
the unit vector in the direction of $\tilde{x}_{i}-c_{i}$. We write
$\tilde{x}_{i}-c_{i}=\alpha_{i}u_{i}$. We have 
\begin{equation}
\langle u_{i},c_{i}\rangle=\langle u_{i},\tilde{x}_{i}-\alpha_{i}u_{i}\rangle=\langle u_{i},\tilde{x}_{i}\rangle-\alpha_{i}.\label{eq:uc-uxa}
\end{equation}
If $i$ is large enough, there is a tail of the sequence $\{l_{i}^{*}\}$
which lies in $L_{2}$. Let $l^{*}$ be an index in $L_{2}$ that
appears in the sequence $\{l_{i}^{*}\}$ infinitely often. We let
$w:=w_{l^{*}}$ and $M=M_{l^{*}}$. So
\[
\alpha_{i}\langle w,u_{i}\rangle=\langle w,\tilde{x}_{i}\rangle-\langle w,c_{i}\rangle\geq\langle w,\tilde{x}_{i}\rangle-M.
\]
When $i$ is large enough, we have $\alpha_{i}\langle w,u_{i}\rangle=\langle w,\tilde{x}_{i}-c_{i}\rangle>M-M=0$,
so $\langle w,u_{i}\rangle>0$. Hence $\alpha_{i}\geq\alpha_{i}\langle w,u_{i}\rangle\geq\langle w,\tilde{x}_{i}\rangle-M$.
Therefore, making use of \eqref{eq:uc-uxa}, we have 
\[
\langle u_{i},c_{i}\rangle\leq\langle u_{i},\tilde{x}_{i}\rangle-\langle w,\tilde{x}_{i}\rangle+M.
\]
By the workings of the corresponding algorithms, we have $\langle u_{i},\tilde{x}_{i}\rangle>\langle u_{i},c_{i}\rangle$
and $\langle u_{i},\tilde{x}_{j}\rangle\leq\langle u_{i},c_{i}\rangle$
for all $j>i$. This gives $\langle u_{i},\tilde{x}_{j}-\tilde{x}_{i}\rangle\leq0$,
which gives $\langle u_{i},v\rangle\leq0$. 

Let $u$ be a cluster point of $\{u_{i}\}$. We can consider subsequences
so that $l_{i}^{*}=l^{*}$ and $\lim_{i\to\infty}u_{i}$ exists. For
any point $c\in K_{l^{*}}$, we have 
\begin{eqnarray*}
\langle u,c\rangle & = & \lim_{i\to\infty}\langle u_{i},c\rangle\\
 & \leq & \liminf_{i\to\infty}\langle u_{i},c_{i}\rangle\\
 & \leq & \liminf_{i\to\infty}[\langle u_{i},\tilde{x}_{i}\rangle-\langle w,\tilde{x}_{i}\rangle+M]\\
 & = & \liminf_{i\to\infty}\|\tilde{x}_{i}\|\left(\left\langle u_{i},\frac{\tilde{x}_{i}}{\|\tilde{x}_{i}\|}\right\rangle -\left\langle w,\frac{\tilde{x}_{i}}{\|\tilde{x}_{i}\|}\right\rangle \right)+M\\
 & = & \liminf_{i\to\infty}\|\tilde{x}_{i}\|[\langle u_{i},v\rangle-\langle w,v\rangle]+M\\
 & = & -\infty,
\end{eqnarray*}
which is absurd. The contradiction gives $v\in R(K_{l})$ for all
$l\in\{1,\dots,r\}$.
\end{proof}
By combining Theorems \ref{thm:No-cluster-pt} and \ref{thm:Recession-dirns},
we can conclude the following.
\begin{cor}
(Certifying infeasibility in finitely many operations) Suppose $X=\mathbb{R}^{n}$,
$K=\emptyset$ and $\cap_{l=1}^{r}R(K_{l})=\emptyset$ in either Algorithm
\ref{alg:Mass-proj-alg} run with $\bar{p}=\infty$ in step 0 or Algorithm
\ref{alg:BAP}. Suppose also that the condition \eqref{enu:nec-condn-no-cluster-pts}
is satisfied. For any starting point $x_{0}$, the algorithms terminate
with $\tilde{F}_{i}=\emptyset$ or $F_{i}=\emptyset$ after finitely
many iterations.
\end{cor}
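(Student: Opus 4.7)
The plan is to prove this by contradiction by combining Theorems \ref{thm:No-cluster-pt} and \ref{thm:Recession-dirns}. Suppose, for contradiction, that the algorithm in question (either Algorithm \ref{alg:Mass-proj-alg} with $\bar{p}=\infty$ or Algorithm \ref{alg:BAP}) does not terminate, so $\tilde{F}_i \neq \emptyset$ (respectively $F_i \neq \emptyset$) for every $i \geq 0$, and an infinite sequence $\{x_i\} \subset \mathbb{R}^n$ is generated.

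First, I would invoke Theorem \ref{thm:No-cluster-pt} (parts (1) and (3), applicable since condition \eqref{enu:nec-condn-no-cluster-pts} is assumed) to conclude that $\{x_i\}$ has no strong cluster point. Because $X = \mathbb{R}^n$ is finite-dimensional, the Bolzano--Weierstrass property then forces $\|x_i\| \to \infty$; otherwise some subsequence would stay in a bounded set and admit a convergent (strong) cluster point.

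Next, I would look at the normalized sequence $\{x_i / \|x_i\|\}$, which lies on the unit sphere of $\mathbb{R}^n$ and hence (again by compactness) has some cluster point $v$ with $\|v\|=1$. Applying Theorem \ref{thm:Recession-dirns} then yields $v \in R(K_l)$ for every $l \in \{1,\ldots,r\}$, so that $v \in \bigcap_{l=1}^r R(K_l)$. Since $v \neq 0$, this contradicts the standing hypothesis $\bigcap_{l=1}^r R(K_l) = \emptyset$ (read in the natural way as "contains no nonzero direction," since recession cones always contain the origin). This contradiction shows that the algorithm must reach an index $i$ with $\tilde{F}_i = \emptyset$ (or $F_i = \emptyset$) in finitely many iterations.

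There is no real obstacle here since Theorems \ref{thm:No-cluster-pt} and \ref{thm:Recession-dirns} do all the heavy lifting; the only point worth a brief comment is the use of finite dimensionality to pass from "no strong cluster point" to "$\|x_i\|\to\infty$" and then to extract a unit cluster direction, both of which fail in general Hilbert space and therefore explain why the hypothesis $X=\mathbb{R}^n$ appears in the statement.
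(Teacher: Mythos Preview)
Your argument is correct and matches the paper's intent: the paper gives no explicit proof of this corollary, merely stating that it follows ``by combining Theorems \ref{thm:No-cluster-pt} and \ref{thm:Recession-dirns},'' which is precisely the contradiction argument you spell out (no cluster point $\Rightarrow \|x_i\|\to\infty$ in $\mathbb{R}^n$ $\Rightarrow$ a unit cluster direction $v\in\bigcap_l R(K_l)$). Your parenthetical remark that $\bigcap_{l=1}^r R(K_l)=\emptyset$ must be read as ``contains no nonzero direction'' is also the right way to make sense of the hypothesis, since each $R(K_l)$ contains the origin.
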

Examples involving 
\begin{eqnarray*}
K_{1} & = & \{(x,y)\in\mathbb{R}^{2}\mid y\geq e^{-x}\},\\
\mbox{ and }K_{2} & = & \{(x,y)\in\mathbb{R}^{2}\mid y\leq-e^{-x}\}
\end{eqnarray*}
show that when $X=\mathbb{R}^{n}$, $K=\emptyset$ and $\cap_{l=1}^{r}R(K_{l})\neq\emptyset$,
one may not get a certificate of infeasibility in finitely many operations.
\begin{rem}
(Infeasibility certificate) Note that when $X=\mathbb{R}^{n}$, the
normals of the halfspaces need to be linearly dependent before infeasibility
can be detected. Since a set of $n$ vectors in $\mathbb{R}^{n}$
can be arbitrarily close to another set of $n$ linearly independent
vectors, one might need a large number of halfspaces to detect infeasibility.
The QP algorithm in \cite{Goldfarb_Idnani} for example gives a (Farkas
Lemma type) certificate of infeasibility of a system $Ax\leq b$,
where $A\in\mathbb{R}^{m\times n}$ and $b\in\mathbb{R}^{m}$, by
finding a vector $r\in\mathbb{R}^{n}$ such that 
\begin{equation}
r\in\mathbb{R}_{+}^{n},\, r^{T}A=0\mbox{ and }r^{T}b<0.\label{eq:Farkas-type}
\end{equation}
An acceptable relaxation of $r^{T}A=0$ would be that $r^{T}A$ being
approximately zero.
\end{rem}
We remark on aggregation of constraints.
\begin{rem}
\label{rem:aggregation}(Aggregation of constraints) Observe that,
in Algorithm \ref{alg:BAP}, $x_{i}$ is the projection of $x_{0}$
onto $F_{i-1}$. We can aggregate some of the constraints describing
$F_{i-1}$ in a manner similar to bundle methods. More precisely,
for constraints $a_{j}^{T}x\leq b_{j}$ for all $j\in J$, we can
find multipliers $\lambda_{j}\geq0$ for all $j\in J$ such that we
now store the single constraint 
\[
\bigg[\sum_{j\in J}\lambda_{j}a_{j}\bigg]^{T}x\leq\bigg[\sum_{j\in J}\lambda_{j}b_{j}\bigg].
\]
The polyhedron thus produced would be larger than $F_{i-1}$, but
fewer linear constraints need to be stored. We only require that $x_{i}$
is the projection of $x_{0}$ onto the new polyhedron. Projecting
$x_{i}$ onto a set $K_{l}$ not containing $x_{i}$ produces a new
polyhedron, from which we get $\|x_{i+1}-x_{0}\|>\|x_{i}-x_{0}\|$. 
\end{rem}
We present another result for detecting infeasibility in the BAP.
\begin{thm}
\label{thm:BAP-to-infinity}(BAP) Let $X$ be a Hilbert space, $x_{0}\in X$
and $K_{l}\subset X$ be closed convex sets for $l\in\{1,\dots,r\}$.
Suppose Algorithm \ref{alg:BAP} is modified such that 
\begin{enumerate}
\item The aggregation of constraints mentioned in Remark \ref{rem:aggregation}
is carried out, and that $x_{i}$ is the still the projection of $x_{0}$
onto the new polyhedron.
\item Halfspaces obtained by projecting $x_{i}$ onto some of the $K_{l}$
are intersected with the polyhedron stored earlier. One of these halfspaces
is obtained by projecting $x_{i}$ onto the set $K_{l_{i}^{*}}$ for
which $d(x_{i},K_{l_{i}^{*}})=\max_{l\in\{1,\dots,r\}}d(x_{i},K_{l})$.
The next iterate $x_{i+1}$ is the projection of $x_{0}$ onto this
new polyhedron. 
\end{enumerate}
Then the sequence of iterates $\{x_{i}\}$ cannot have a strong cluster
point. If $X=\mathbb{R}^{n}$, the sequence $\{\|x_{i}-x_{0}\|\}_{i}$
must be monotonically increasing to infinity.\end{thm}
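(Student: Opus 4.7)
The plan is to derive a quantitative strict-ascent inequality for $\|x_i - x_0\|^2$ and combine it with monotonicity and, in the finite-dimensional case, sequential compactness. Write $Q_i$ for the aggregated polyhedron onto which $x_i$ is the projection of $x_0$ (so $x_i = P_{Q_i}(x_0)$, using hypothesis (1)), and write $H_i^{\leq}$ for the halfspace produced by projecting $x_i$ onto $K_{l_i^*}$, which by hypothesis (2) is among the halfspaces intersected into $Q_i$ to form the polyhedron used at step $i+1$. The infeasibility context of this section forces $x_i \notin K_{l_i^*}$, so $H_i^{\leq}$ strictly excludes $x_i$ and $d(x_i,H_i^{\leq}) = d(x_i,K_{l_i^*}) > 0$. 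The key estimate is
\[
\|x_{i+1} - x_0\|^2 \;\geq\; \|x_i - x_0\|^2 + d(x_i, K_{l_i^*})^2.
\]
To prove it, use the variational inequality $\langle x_0 - x_i, y - x_i\rangle \leq 0$ valid for all $y \in Q_i$. Expanding $\|y - x_0\|^2 = \|y - x_i\|^2 + \|x_i - x_0\|^2 + 2\langle y - x_i, x_i - x_0\rangle$, the cross term is nonnegative, so $\|y - x_0\|^2 \geq \|y - x_i\|^2 + \|x_i - x_0\|^2$ for every $y \in Q_i$. Specializing to $y = x_{i+1}$ (which lies in $Q_i \cap H_i^{\leq}$) and using $\|x_{i+1} - x_i\| \geq d(x_i, H_i^{\leq}) = d(x_i, K_{l_i^*})$ gives the claim.

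Next, suppose for contradiction that $\{x_i\}$ has a strong cluster point $\bar x$ along a subsequence $x_{i_k} \to \bar x$. Infeasibility yields some $\ell$ with $d(\bar x, K_\ell) > 0$; set $c := \tfrac{1}{2} d(\bar x, K_\ell)$. Continuity of $d(\cdot, K_l)$ and the definition of $l_i^*$ give, for all $k$ sufficiently large,
\[
d(x_{i_k}, K_{l_{i_k}^*}) \;=\; \max_l d(x_{i_k}, K_l) \;\geq\; d(x_{i_k}, K_\ell) \;\geq\; c.
\]
The key estimate then forces $\|x_{i_k+1} - x_0\|^2 - \|x_{i_k} - x_0\|^2 \geq c^2$. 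On the other hand, the key estimate also shows $\{\|x_i - x_0\|\}$ is monotone nondecreasing, and since it accumulates at $\|\bar x - x_0\|$ along $\{i_k\}$, the whole sequence converges to $\|\bar x - x_0\|$, forcing consecutive differences to zero — a contradiction. Hence no strong cluster point can exist.

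For the final assertion, in $X = \mathbb{R}^n$ a bounded sequence has a strongly convergent subsequence, hence a strong cluster point; by the previous paragraph this is impossible, so $\{\|x_i - x_0\|\}$ must be unbounded, and monotonicity upgrades this to $\|x_i - x_0\| \nearrow \infty$. The only real obstacle is the key estimate; the subtle point there is that aggregation hypothesis (1) is used precisely to preserve the variational inequality for every $y \in Q_i$, so that the projection step behaves identically whether we regard $x_i$ as the projection of $x_0$ onto $F_{i-1}$ or onto the larger aggregated $Q_i$.
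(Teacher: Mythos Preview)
Your proof is correct and follows essentially the same route as the paper's. Both arguments hinge on the Pythagorean-type estimate $\|x_{i+1}-x_0\|^2 \geq \|x_i-x_0\|^2 + d(x_i,K_{l_i^*})^2$, obtained from the projection variational inequality (the paper phrases this as $\angle x_{i+1}x_ix_0 \geq \pi/2$) together with $x_{i+1}\in H_i^{\leq}$; the only cosmetic difference is that you derive the contradiction via ``consecutive differences tend to zero'' while the paper argues directly that $\|x_{i+1}-x_0\|$ overshoots $\|\bar x - x_0\|$ once $x_i$ is close enough to $\bar x$.
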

\begin{proof}
Suppose on the contrary that $\bar{x}$ is a strong cluster point.
Without loss of generality, let $K_{1}$ be such that $d(\bar{x},K_{1})=\max_{l\in\{1,\dots,r\}}d(\bar{x},K_{l})$.
If $\|x_{i}-x_{0}\|>\|\bar{x}-x_{0}\|$, then $\|x_{j}-x_{0}\|>\|\bar{x}-x_{0}\|$
for all $j\geq i$, which means that $\bar{x}$ cannot be a cluster
point. Suppose $x_{i}$ is close enough to $\bar{x}$ so that 
\[
\|x_{i}-x_{0}\|^{2}+[d(\bar{x},K_{1})-\|x_{i}-\bar{x}\|]^{2}>\|\bar{x}-x_{0}\|^{2}.
\]
Then $d(x_{i},K_{1})\geq d(\bar{x},K_{1})-\|x_{i}-\bar{x}\|$. In
other words, the distance of $x_{i}$ to the halfspace produced by
projecting $x_{i}$ onto $K_{1}$ is at least $d(\bar{x},K_{1})-\|x_{i}-\bar{x}\|$. 

In view of (1), we have $\angle x_{i+1}x_{i}x_{0}\geq\pi/2$. It is
elementary to check that 
\[
\|x_{i+1}-x_{0}\|^{2}\geq\|x_{i}-x_{0}\|^{2}+[d(\bar{x},K_{1})-\|x_{i}-\bar{x}\|]^{2}>\|\bar{x}-x_{0}\|^{2},
\]
which would imply that $\bar{x}$ cannot be a cluster point of $\{x_{i}\}$.
The second sentence of the theorem is easy.
\end{proof}

\section{\label{sec:inner-GI}Practical implementation using a dual QP algorithm}

The algorithms in this paper have been shown to enjoy several useful
theoretical properties mentioned in earlier parts of this paper. However,
the size of the QPs that need to be solved may be too large in practice
for a practical QP solver. In this section, we explain that the dual
active set QP algorithm of Goldfarb and Idnani \cite{Goldfarb_Idnani}
can give useful iterates even when the QPs are not solved to optimality.

For the sake of simplicity, we repeat the narrative of \cite{Goldfarb_Idnani}
where only inequality constraints are considered. Let $C\in\mathbb{R}^{n\times m}$
and $b\in\mathbb{R}^{m}$ and consider the QP that arises repeatedly
in our algorithms
\begin{eqnarray*}
QP(y,C,b):= & \underset{\tilde{x}\in\mathbb{R}^{n}}{\min} & \frac{1}{2}\|\tilde{x}-y\|^{2}\\
 & \mbox{s.t.} & C^{T}\tilde{x}\leq b.
\end{eqnarray*}
In other words, we want to project the point $y$ onto the polyhedron
$F:=\{\tilde{x}:C^{T}\tilde{x}\leq b\}$. A positive definite Hessian
in the QP is required for the algorithm in \cite{Goldfarb_Idnani}
to work, which is indeed met in our context, where the Hessian of
our QP is the identity matrix. At the projection $P_{F}(y)$, not
every constraint in $C^{T}\tilde{x}\leq b$ is tight. 

The dual QP algorithm of \cite{Goldfarb_Idnani} starts with a candidate
active index set $A_{0}:=\emptyset$ and $\tilde{x}_{0}:=y$. Note
that $\tilde{x}_{i}=P_{F_{A_{i}}}(y)$ for $i=0$, where 
\[
F_{A_{i}}:=\{\tilde{x}:c_{j}^{T}\tilde{x}\leq b_{j}\mbox{ for all }j\in A_{i}\},
\]
with $c_{j}$ being the $j$th column of $C$. The structure $\tilde{x}_{i}=P_{F_{A_{i}}}(y)$
and $c_{j}^{T}\tilde{x}_{i}=b_{j}$ for all $j\in A_{i}$ would be
maintained throughout the algorithm over all iterations $i$ until
termination at an optimal active set $\bar{A}$, where $P_{F_{\bar{A}}}(y)=P_{F}(y)$.
In each iteration $i$, the next index set $A_{i}$ is determined
by first choosing some $j\notin A_{i-1}$ such that $c_{j}^{T}\tilde{x}_{i}>b_{j}$.
Next, the active set $A_{i}$ is updated so that $A_{i}\subset A_{i-1}\cup\{j\}$.
Some useful properties are:
\begin{enumerate}
\item $\|\tilde{x}_{i}-y\|$ is monotonically increasing, and
\item The dual QP algorithm converges in finitely many iterations to $P_{F}(y)$. 
\end{enumerate}
Suppose at the $i$th iteration of the algorithms in earlier sections,
we want to project $x_{i}$ onto some polyhedron $F$. We would let
$\tilde{x}_{0}:=x_{i}$, and run the GI algorithm to get a sequence
of iterates $\{\tilde{x}_{j}\}_{j}$ that converges to $P_{F}(\tilde{x}_{0})=P_{F}(x_{i})$
in finitely many steps by property (2). This convergence property
is reassuring, but the number of iterations may still be prohibitively
large. We now show that the iterates $\{\tilde{x}_{j}\}_{j}$ get
better for the associated feasibility problem, even if we don't arrive
at $P_{F}(\tilde{x}_{0})$. Since $\tilde{x}_{j}=P_{F_{A_{j}}}(\tilde{x}_{0})$
and $F\subset F_{A_{j}}$, Fej\'{e}r monotonicity \eqref{eq:Fejer-mon-eq}
implies that 
\begin{equation}
\|\tilde{x}_{j}-c\|^{2}\leq\|x_{i}-c\|^{2}-\|x_{i}-\tilde{x}_{j}\|^{2}\mbox{ for all }c\in F.\label{eq:Fejer-GI}
\end{equation}
Property (1) implies that the term $\|x_{i}-\tilde{x}_{j}\|^{2}$
increases with the number of iterations $j$ in the GI algorithm,
so \eqref{eq:Fejer-GI} implies that $\|\tilde{x}_{j}-c\|^{2}$ decreases
for all $c\in F$, and by at least the factor $\|x_{i}-\tilde{x}_{j}\|^{2}$.
In other words, the iterates $\tilde{x}_{j}$ get better for the associated
SIP, CIP or BAP. For the SIP and CIP, we may arrive at a point satisfying
the conditions in step 2 of Algorithm \ref{alg:Mass-proj-alg} or
\ref{thm:SHQP-alg} without solving the QP to optimality by considering
$x_{i}+t(\tilde{x}_{j}-x_{i})$ for some $t\in[1,2]$. Recall that
our earlier results tell us that such a point can still give multiple-term
superlinear convergence. 

We call the inner iterations to solve the QPs \emph{inner GI steps}.
The inner GI steps in the dual QP algorithm allows for the underlying
QP in the BAP to be solved using warmstart solutions from previous
iterations. In other words, the QPs do not have to be solved from
scratch.

Note that for the BAP, we project from $x_{0}$ all the time, but
in the SIP, we project from $x_{i}$ at the $i$th iteration, and
$x_{i}$ is a point closer to $K:=\cap_{l=1}^{r}K_{l}$ than $x_{0}$:
We think this is a reason why the SIP is easier to solve than the
BAP. 

In prevailing SIP algorithms, the operations that can be taken are
(1) to find supporting halfspaces of $K_{l}$ by projecting from $x_{i}$,
and (2) to move $x_{i}$ to a point $x_{i+1}$ by various strategies.
We propose a new operation: (3) to perform inner GI steps to find
better candidates for $x_{i+1}$ before performing operation (2).
 By introducing operation (3), we can reduce the SIP for sets with
smooth boundaries to Newton-like methods that give superlinear convergence
in the manner of \cite{G-P98,G-P01} or of the algorithms in this
paper, and such fast convergence had indeed been observed. Further
details are discussed in \cite{SHDQP}.

\section{Conclusion}

We have done what we set out to do in Subsection \ref{sub:Contrib}.
The SIP and the CIP can be cast as a problem of finding an $x$ such
that 
\[
\max_{l\in\{1,\dots,r\}}f_{l}(\cdot)\leq0
\]
 or to give a certificate of nonexistence if no such $x$ exists,
where each $f_{l}(\cdot)$ is convex. Our algorithms here can achieve
multiple-term superlinear or multiple-term quadratic convergence if
the proper conditions hold.  The BAP \eqref{eq:Proj-pblm} cannot
be written in this form, which may be why we cannot expect the fast
convergence for the BAP in general. We also make further observations
on the infeasible case in Section \ref{sec:Infeasible}, showing that
under reasonable conditions, a finite number of operations can give
a certificate of infeasibility for both the SIP and BAP.

\bibliographystyle{amsalpha}
\bibliography{../refs}

\end{document}